\newtheorem{theorem}{Theorem}[section]
\newtheorem{question}{Question}
\newtheorem{lemma}[theorem]{Lemma}
\newtheorem{prop}[theorem]{Proposition}
\newtheorem{cor}[theorem]{Corollary}
\newtheorem{claim}[theorem]{Claim}
\theoremstyle{definition}
\newtheorem{definition}[theorem]{Definition}
\newtheorem{remark}[theorem]{Remark}
\newtheorem*{acknowledge}{Acknowledgements}
\newcommand{\N}{\mathbb{N}}
\newcommand{\R}{\mathbb{R}}
\newcommand{\F}{\mathbb{F}}
\def\acts{\curvearrowright}
\newcommand{\ep}{\varepsilon}
\newcommand{\w}{\mathfrak{w}}
\newcommand{\coc}{\mathfrak{w}}
\newcommand{\partialto}{\rightharpoonup}
\newcommand{\dom}{\mathrm{dom}}
\newcommand{\im}{\mathrm{im}}
\newcommand{\graph}{\mathrm{graph}}
\newcommand{\set}[1]{\left\{ #1 \right\}}
\newcommand*{\defeq}{\mathrel{\vcenter{\baselineskip0.5ex \lineskiplimit0pt \hbox{\scriptsize.}\hbox{\scriptsize.}}}=}
\DeclareMathOperator{\good}{good}
\newcommand{\eqcomment}[1]{\Big[\text{#1}\Big] \hspace{6pt}}
\def\l@section{\@tocline{1}{5pt}{0pc}{}{}}
\renewcommand{\tocsection}[3]{%
	\indentlabel{\@ifnotempty{#2}{\makebox[20pt][l]{%
				\ignorespaces#1 #2.\hfill}}}\sc #3\dotfill}
\newdimen{\tocsubsecmarg}
\def\l@subsection{\@tocline{2}{3pt}{0pc}{\tocsubsecmarg}{}}
\renewcommand{\tocsubsection}[3]{%
	\indentlabel{\@ifnotempty{#2}{\makebox[30pt][l]{%
				\ignorespaces#1 #2.\hfill}}}#3\dotfill}
\let\oldtocsubsection=\tocsubsection
\renewcommand{\tocsubsection}[2]{\hspace{3em} \oldtocsubsection{#1}{#2}}
\title{One-ended spanning trees and definable combinatorics}
\author{Matt Bowen}
\author{Antoine Poulin}
\author{Jenna Zomback}
\date{\today}
\begin{document}

\maketitle

\vspace{3mm}

\begin{abstract}
    Let $(X,\tau)$ be a Polish space with Borel probability measure $\mu,$ and $G$ a locally finite one-ended Borel graph on $X.$  We show that $G$ admits a Borel one-ended spanning tree generically.  If $G$ is induced by a free Borel action of an amenable (resp., polynomial growth) group then we show the same result $\mu$-a.e. (resp., everywhere). Our results generalize recent work of Tim\'ar, as well as of Conley,  Gaboriau, Marks, and Tucker-Drob, who proved this in the probability measure preserving setting. We apply our theorem to find Borel orientations in even degree graphs and measurable and Baire measurable perfect matchings in regular bipartite graphs, refining theorems that were previously only known to hold for measure preserving graphs. In particular, we prove that bipartite one-ended $d$-regular Borel graphs admit Baire measurable perfect matchings.
\end{abstract}

\tableofcontents

\pagebreak
\section{Introduction}

In this paper we are concerned with determining which hyperfinite Borel graphs admit Borel one-ended spanning trees and with applications of these objects to definable combinatorics.

Recall that a \textbf{Borel graph} on a Polish space $(X,\tau)$ is a graph whose vertex set $V(G)$ is $X$ and whose edge set $E(G)$ is a Borel subset of $X^2.$  A Borel graph is \textbf{hyperfinite} if it can be written as an increasing union of component finite Borel graphs. A connected graph $G$ is \textbf{one-ended} if for every finite $F\subset V(G)$ the induced subgraph on $V(G)-F$ has exactly one infinite connected component, and a non-connected graph is one-ended if each of its connected components is. Such graphs arise naturally in various contexts, such as actions of amenable groups, and characterizing the combinatorial properties of such graphs has been a major focus of descriptive graph theory since its inception in {the work of Kechris, Pestov. amd Todorcevic} \cite{kst}. See \cite{lovasz,kechris.marks,pikhurko2020borel} for surveys and introductions to this topic. 

In the special case of probability measure preserving (pmp) graphs (also refered to as \textbf{graphings} in the literature), the existence of finitely-ended spanning trees has been well studied and frequently applied.  In the pmp context, a Borel graph is a.e. hyperfinite if and only if it admits a Borel a.e. spanning tree with at most two ends \cite{adams, blps.spanning}. Recently, Tim\'ar \cite{timar} and independently Conley,  Gaboriau, Marks, and Tucker-Drob\cite{cgmtd}, refined this to show that the spanning tree can be chosen to have the same number of ends as the original component.  This latter fact already has a number of applications in probability theory, measurable equidecompositions, and measurable combinatorics; see \cite{bowen.kun.sabok,timar2021nonamenable,timar.new}.

Outside of the pmp setting it is no longer true that hyperfinite graphs admit spanning trees with at most two ends, for example, the Schreier graph of the boundary action of $\F_2$. The so-called tail equivalence relation was shown to be hyperfinite by Dougherty, Jackson, and Kechris in \cite{djk}, and Marquis and Sabok proved hyperfiniteness for boundary actions of hyperbolic groups in \cite{marquis2020hyperfiniteness}.  Since the constructions of one-ended spanning trees in \cite{timar} and \cite{cgmtd} both rely on the existence of at most two-ended spanning trees, the methods from these papers can not be easily adapted to other circumstances.  In the present paper we show that the issue of not having a two-ended spanning tree can typically be overcome. We give more direct constructions of one-ended spanning trees in most of the natural settings where hyperfiniteness is guaranteed.

 Our main result is the following:  

\begin{theorem}\label{trees}
Let $X$ be a Polish space equipped with a Borel probability measure $\mu$. Any locally finite one-ended Borel graph $G$ on $X$ admits a Borel one-ended spanning tree:

\begin{enumerate}[(i)]
    \item \label{intro:comeagre} on an invariant comeagre set.
    \item \label{intro:conull} on an invariant $\mu$-conull set if $G$ is induced by a free Borel action of an amenable group.
    \item \label{intro:poly_growth} everywhere if $G$ is induced by a free Borel action of a polynomial growth group.
\end{enumerate}
\end{theorem}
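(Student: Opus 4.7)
The plan is to construct the one-ended spanning tree from a Borel increasing hyperfinite exhaustion $G = \bigcup_n G_n$ of $G$ by Borel subgraphs $G_n \subseteq G_{n+1}$ whose connected components are all finite. Such an exhaustion exists in each of the three settings: generically in (i) by generic hyperfiniteness of locally finite Borel graphs; $\mu$-a.e.\ in (ii) by Connes--Feldman--Weiss applied to the orbit equivalence relation of the amenable group action; and everywhere in (iii) by an explicit Borel tiling of each orbit by metric balls of large enough radius, using polynomial growth. After restricting to the invariant set on which the exhaustion lives I would treat the three cases uniformly.

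The key input from one-endedness is that for every level $n$ and every $G_n$-component $K$, the complement $V(G)\setminus K$ has a unique infinite $G$-component $U_K$, and hence the finite nonempty set
\[
\good(K) = \{ e \in E(G) : |e \cap K| = 1,\ e \cap U_K \neq \emptyset \}
\]
of edges from $K$ pointing toward the end of $G$ depends Borel-measurably on $K$. By Luzin--Novikov uniformization one can then Borel-select a single edge $\sigma(K) \in \good(K)$ for every $G_n$-component $K$, at every level.

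The tree $T$ is then assembled inductively. Maintain at each stage a Borel spanning forest $T_n \subseteq G_n$ of $G_n$. To pass from $T_n$ to $T_{n+1}$, for each $G_{n+1}$-component $K^+$ with $G_n$-subcomponents $K_1,\ldots,K_m$, add to $T_n$ those edges $\sigma(K_i)$ whose far endpoint lies inside $K^+$. If exactly $m-1$ of the $\sigma(K_i)$'s land inside $K^+$ and the remaining one exits $K^+$, then the addition merges $T_n \cap K_1,\ldots,T_n \cap K_m$ into a spanning tree of $K^+$ and identifies a canonical \emph{root subcomponent} pointing toward the end. Setting $T = \bigcup_n T_n$ then gives the desired spanning subgraph, since the sequence of root subcomponents through any fixed vertex yields a unique $T$-ray to infinity, forcing one-endedness.

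The main obstacle is that without further care the $(m-1)$-out-of-$m$ condition on exit edges need not hold: conceivably several $\sigma(K_i)$'s exit $K^+$, or none does, and cycles may also appear if two far endpoints coincide. Overcoming this will likely require replacing the naive $\sigma$ by a two-layer Borel selection --- first designating a root subcomponent inside each $G_{n+1}$-component, then picking $\sigma$-edges compatible with that choice and with acyclicity --- together with an inductive argument that the resulting root-chain of each vertex is the unique ray to infinity in its component of $T$. I expect case (i) to be the most delicate, since the Borel uniformizations and the hyperfinite exhaustion must be arranged to hold on a common invariant comeagre set, which I anticipate handling via a Kuratowski--Ulam-style argument.
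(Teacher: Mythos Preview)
Your proposal has a genuine gap at precisely the point you flag as ``the main obstacle.'' You correctly observe that the na\"ive edge-selection $\sigma(K)$ may create cycles (two $G_n$-components can each lie in the infinite complement of the other and point $\sigma$-edges at each other), and that the $(m-1)$-out-of-$m$ exit condition need not hold. But your proposed fix---a ``two-layer Borel selection'' designating root subcomponents---is too vague to carry weight, and there is strong evidence it cannot be made to work as stated: the paper itself poses as an open question (see the ``Future work'' section) whether every locally finite one-ended \emph{hyperfinite} Borel graph admits a Borel one-ended spanning tree a.e. Your argument for part~(ii), which inputs only hyperfiniteness from Connes--Feldman--Weiss plus one-endedness, would resolve that question if it worked. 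So the passage from ``hyperfinite exhaustion of a one-ended graph'' to ``one-ended spanning tree'' is not a routine uniformization exercise; it is the core difficulty.

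The paper's route is to build not an arbitrary hyperfinite exhaustion but a \emph{connected toast}: a nested family of finite tiles satisfying the extra condition that each tile minus its sub-tiles is connected. This connectedness is exactly what makes the edge-selection argument go through cleanly (\cref{toast_implies_tree}): one spans each connected ``shell'' $K \setminus \bigcup_{L \subsetneq K} L$ and adds a single outward edge, and the nesting of tiles prevents cycles automatically. All the work is then in constructing connected toasts in each regime, and the three cases require genuinely different inputs: in~(i) a generic modification of an arbitrary toast (\cref{toast generic}); in~(ii) the existence of F{\o}lner sets $F_n$ with $B_n \setminus F_n$ connected (\cref{connected folner}), whose proof in turn invokes the pmp one-ended spanning tree theorem of \cite{timar,cgmtd} as a black box; and in~(iii) a uniform connectedness constant $\kappa_\Gamma$ for boundaries in finitely presented one-ended groups (\cref{unif connec}, \cref{fullcor}) combined with the Borel asymptotic dimension machinery of \cite{basymd}. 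Your sketch for~(iii) (``explicit Borel tiling by metric balls'') is also too thin: polynomial growth enters via Gromov's theorem and finite asymptotic dimension, not via a direct ball-tiling.
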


In fact, in each case of \cref{trees} we not only construct one-ended spanning trees but \textbf{connected toasts} (particular strong witnesses to hyperfiniteness, precisely defined in \cref{toast} and originally introduced by the first author, Kun, and Sabok in \cite{bowen.kun.sabok}), whose existence is potentially stronger; see \cref{toast generic,toast a.e.,toast borel}.

As a consequence of \cref{trees}, we are able to quickly deduce new results on definable matchings and balanced orientations that were previously known only to hold for pmp graphs.  For example, we prove the following Baire measurable analogue of the main result of \cite{bowen.kun.sabok}. Here and in the rest of the paper, by \textbf{generically} we mean on a G-invariant comeagre Borel set.

\begin{theorem}\label{bm match}
Any bipartite one-ended $d$-regular Borel graph has a Borel perfect matching generically.  
\end{theorem}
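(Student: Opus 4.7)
The plan is to deduce \cref{bm match} by combining the comeagre case of \cref{trees}, in the stronger form promised as \cref{toast generic} (namely that one gets a Borel \emph{connected toast} generically), with the finitary matching procedure of the first author, Kun, and Sabok from \cite{bowen.kun.sabok}.

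First I would apply \cref{toast generic} to obtain a $G$-invariant comeagre Borel set $X_0 \subseteq X$ on which $G$ admits a Borel connected toast $\mathcal T = (T_n)_{n \in \N}$. Since we only need the matching to exist generically, it suffices to build a Borel perfect matching of the restriction $G \restriction X_0$, which is still a bipartite, locally finite, one-ended $d$-regular Borel graph now carrying a Borel connected toast.

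Second, I would replay the matching construction of \cite{bowen.kun.sabok}, whose only genuine input is a bipartite one-ended $d$-regular graph equipped with a connected toast. That argument proceeds by induction on the toast level: at stage $n$, on every tile $T$ of level $n$, one extends the current partial matching so that it covers all vertices absorbed at this level. The key combinatorial fact proved in \cite{bowen.kun.sabok} is that bipartiteness, $d$-regularity, and one-endedness force a Hall-type condition on each tile once short augmenting paths that exit through the boundary of the next toast layer are allowed; these paths live inside a finite, already-exposed region of $\mathcal T$, so the entire inductive choice is determined locally by $\mathcal T$. Because $\mathcal T$ is Borel and the extension at each level is a Borel function of finitely many Borel data, the resulting perfect matching of $G \restriction X_0$ is Borel.

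The main obstacle, and the reason the result was previously restricted to the pmp setting, is producing the connected toast itself without a measure: once \cref{trees}/\cref{toast generic} delivers one generically, the rest of the \cite{bowen.kun.sabok} proof is purely finitary and transfers verbatim. A minor verification needed along the way is that the augmenting-path step used to patch parity defects between adjacent toast pieces does not implicitly invoke a mass-transport or ergodicity argument; I expect that the same bipartite double-counting on finite tiles that underlies the pmp case suffices, since one-endedness guarantees the required ``escape route'' through later toast layers entirely within the Borel combinatorial data.
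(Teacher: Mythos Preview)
There is a genuine gap. Your plan assumes that the matching argument of \cite{bowen.kun.sabok} is ``purely finitary and transfers verbatim'' once a connected toast is in hand, but this is precisely what the paper flags as false. The pmp construction in \cite{bowen.kun.sabok} does not proceed via a Hall-type augmenting-path argument on tiles; it first passes to an \emph{extreme point} in the space of measurable fractional perfect matchings (which, in the pmp setting, is integral off a union of bi-infinite lines) and then uses the connected toast to \emph{randomly perturb} the fractional matching toward integrality. Both steps are genuinely measure-theoretic: the extreme-point reduction uses the compact convex structure of $L^\infty$ against a measure, and the perturbation step is a Borel--Cantelli/mass-transport style argument. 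Neither has a Baire-category analogue, and the paper says so explicitly (``perturbation arguments of this type fundamentally rely on measure, a different approach must be used'').

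The paper's actual proof of \cref{bm match} (Theorem~\ref{BP matching}) is therefore new, not a replay of \cite{bowen.kun.sabok}. It starts from the constant fractional matching $\sigma_0\equiv 1/d$ and, using the connected toast, builds a sequence of fractional matchings taking values only in $\{0,\tfrac{1}{d},\ldots,1\}$. The key finitary lemma (\cref{claim_matching}) shows that given a tile $C_1$ with $k$ edges and a tower $C_1\subsetneq\cdots\subsetneq C_{dk}$ of toast pieces, one can round $\sigma$ to $\{0,1\}$ on $E(C_1)$ by applying alternating $\tfrac{1}{d}$-circuits through successive annuli $C_{j+1}\setminus C_j$, whose connectedness is exactly property \labelcref{toast:connected}. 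The Baire category theorem then acts as a pigeonhole principle: at each stage one selects a single colour class of tiles, of bounded size and sitting in sufficiently deep towers, whose union is non-meagre in the next basic open set $U_i$. Your proposal does not supply either of these ingredients, and the appeal to \cite{bowen.kun.sabok} cannot substitute for them.
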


Previously, the only general classes of bipartite Borel graphs that were known to admit Borel perfect matchings generically were acyclic or of exponential growth (see the work of Conley and Miller \cite[Theorem B]{conley.miller.pm} and Marks and Unger \cite[Theorem 1.3]{marks2016baire} respectively).  Moreover, it is known that neither the one-ended assumption nor the $d$-regular assumption can be dropped from \ref{bm match} due to ergodicity obstructions, even when such graphs admit Borel fractional perfect matchings (see the work of Conley and Kechris \cite[Section 6]{conley.kechris} and \cite[Example 3.1]{bowen.kun.sabok} respectively).

We also show that connected toasts are enough to find \textbf{Borel balanced orientations} (i.e. Borel orientations of the edges of a graph so that the in-degree of each vertex is equal to its outdegree) of even-degree graphs, refining and generalizing \cite[Corollary 3.7]{bowen.kun.sabok}.

\begin{theorem}\label{borel orientation}
Every even-degree Borel graph that admits a connected toast also admits a Borel balanced orientation.  In particular, every one-ended even-degree Borel graph admits a Borel balanced orientation generically, and every free Borel action of a superlinear growth amenable (resp., polynomial growth) group admits a Borel balanced orientation a.e. (resp., everywhere).
\end{theorem}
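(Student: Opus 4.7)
The theorem has two parts: first, that any even-degree Borel graph $G$ equipped with a connected toast admits a Borel balanced orientation; second, the ``in particular'' claims, which follow immediately by combining the first with \cref{trees} (which supplies a connected toast generically, $\mu$-a.e., or everywhere under the respective hypotheses).

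For the main implication, I fix a Borel connected toast $\mathcal{T}$ on $G$. For each tile $T \in \mathcal{T}$, let $G_T$ denote the finite graph obtained from $G$ by contracting the unbounded component of $V(G) \setminus T$ to a single vertex $v_T^{\ast}$; the double count $\sum_{v \in T} \deg_G(v) = 2|E(G[T])| + |\partial T|$ together with the even-degree hypothesis forces $|\partial T|$ to be even, so $G_T$ is Eulerian. A balanced orientation of $G$ is equivalent to a \emph{coherent} family $\{\sigma_T : T \in \mathcal{T}\}$ of balanced orientations of the $G_T$'s---meaning that $\sigma_{T'}$ restricts to $\sigma_T$ under the natural inclusion $E(G_T) \hookrightarrow E(G_{T'})$ whenever $T \subsetneq T'$---so the task reduces to producing such a coherent family in a Borel way. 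My plan is to construct it by induction along a Borel increasing enumeration $T_0 \subsetneq T_1 \subsetneq \cdots$ of the tiles in each connected component: the step from $\sigma_n$ on $G_{T_n}$ to $\sigma_{n+1}$ on $G_{T_{n+1}}$ is a finite orientation problem on the ``new'' edges $E(G_{T_{n+1}}) \setminus E(G_{T_n})$, asking that the net imbalance at each vertex equals $-b(v)$, where $b(v)$ is the imbalance inherited from $\sigma_n$ at that vertex. The zero-sum and parity constraints on $b$ are automatic from $\sigma_n$ being balanced and $G$ being even-degree; the potentially obstructing Hakimi cut condition I plan to handle by re-routing $\sigma_n$ along cycles inside $G_{T_n}$ before extending, selecting the re-routed orientation minimally in a fixed Borel ordering.

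The main obstacle is showing that such re-routings stabilize---that each edge is re-oriented only finitely often---so that the limit orientation is well-defined. I expect to argue that once a tile is deep enough inside the current toast layer, any later re-routing can be carried out near the new frontier and leaves deep orientations untouched; the finiteness of each tile, together with the nested structure of the toast, is essential for this. This is the Borel counterpart of \cite[Corollary 3.7]{bowen.kun.sabok} in the pmp setting, and since every step depends only on finite tile data, the adaptation should be direct. Once stabilization is established, the resulting orientation is Borel because every intermediate choice---the enumeration of tiles, the lex-least feasible extension, and so on---has been made Borel-ly.
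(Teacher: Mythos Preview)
Your proposal has a genuine gap at exactly the point you flag as ``the main obstacle'': the stabilization of re-routings. You write that you ``expect to argue'' that later re-routings can be confined near the frontier, but you give no mechanism for this, and it is not clear one exists in your setup. Concretely, when you pass from $\sigma_n$ on $G_{T_n}$ to $\sigma_{n+1}$ on $G_{T_{n+1}}$, the Hakimi-type cut condition on the new-edge graph can genuinely fail: the net flow that $\sigma_n$ pushes across the old boundary into a subset $S\subseteq T_{n+1}\setminus T_n$ need not be bounded by the number of new edges leaving $S$, so re-routing inside $T_n$ may be forced. Nothing in your argument prevents these corrections from propagating arbitrarily deep, and the reference to \cite[Corollary~3.7]{bowen.kun.sabok} does not help---that result is deduced from measurable perfect matchings, not from an extend-and-stabilize scheme. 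There is also a structural slip: the tiles in a component of a connected toast form a tree under containment, not a chain, so your ``Borel increasing enumeration $T_0\subsetneq T_1\subsetneq\cdots$'' needs to be the chain of tiles through a chosen basepoint, and then $T_{n+1}\setminus T_n$ may contain other tiles, so \labelcref{toast:connected} does not give you connectivity of that annulus directly.

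The paper sidesteps all of this by never re-orienting an edge. Rather than extending an orientation level by level, it peels off a Borel family of edge-disjoint cycles at each stage (using \cref{subgraph} and the connectivity in \labelcref{toast:connected} to route paths from any surviving edge out to the connected region of the next tile and close them up), orients each cycle cyclically, and deletes it. Cyclic orientations are automatically balanced, so the partial orientation is balanced at every stage; and since at least one edge of every minimal surviving tile is absorbed at each level, finiteness of the tiles guarantees every edge is eventually oriented. Stabilization is thus built in: each edge is oriented exactly once. If you want to rescue your approach, the missing idea is precisely this cycle-peeling---it is what lets the ``re-routing'' live entirely in the new annulus rather than reaching back into $T_n$.
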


Again, the irrational rotation graph and its variants show that this result fails for two-ended graphs.  Theorem \ref{borel orientation} also extends some cases of recent results of Thornton, who showed in \cite{thornton2022orienting} that even degree Borel graphs that are pmp (resp., of subexponential growth) admit Borel orientations with outdegree at most $\frac{deg(v)}{2}+1$ a.e. (resp., everywhere), and of Bencs, Hru\v{s}kov\'{a}, and Tóth, who showed in \cite{bht} that even degree planar lattices have factor of i.i.d balanced orientations. 

\subsection{Future work}
\hspace{0.5em}\\

In this paper we were able to show that all locally finite one-ended Borel graphs admit one-ended spanning trees generically, and it was already known that such graphs admit one-ended spanning trees a.e. if they are hyperfinite and pmp.  While we were able to remove the pmp assumption from the latter result for Schreir graphs of actions of amenable groups, it remains unclear if we can do this in general for hyperfinite graphs.

\begin{question}
Does every locally finite, one-ended, hyperfinite Borel graph also admit a Borel one-ended spanning tree a.e.?
\end{question}

We have also shown that the existence of connected toasts is enough to ensure that $d$-regular bipartite Borel graphs admit Borel perfect matchings almost everywhere and generically. It remains unknown if this is enough to have a Borel perfect matching everywhere, but it was shown by Gao, Jackson, Krohne, and Seward in \cite{gjks.2} that the standard Schreir graphs of $\mathbb{Z}^d$-actions admit Borel perfect matchings, so it seems possible that this fact can be extended.

\begin{question}\label{Q2}
Does every $d$-regular bipartite Borel graph that admits a connected toast admit a Borel perfect matching?
\end{question}

Given that the classical K\H{o}nig theorem implies every $d$-regular bipartite graph admits a proper edge coloring with $d$ colors, we could also strengthen \cref{Q2} to ask for a Borel $d$-coloring.  With \cref{bm match} in mind, this is most likely much easier to obtain generically.

\begin{question}
Does every $d$-regular bipartite one-ended Borel graph $G$ admit a Borel edge coloring with $d$ colors generically?
\end{question}

It is known by the work of the first author and Weilacher \cite{bowen2021definable} that every such graph admits a Baire measurable edge coloring with $d+1$ colors even when the one-ended condition is dropped from the above, but as noted already one-endedness is required to find even a single Baire measurable perfect matching (and unlike edge colorings with $d+1$ colors, each color in an edge coloring with $d$ colors is a perfect matching.).

%{\color{blue} Is Proposition \ref{unif connec} known to be false for sub-exponential growth groups?  If not, then we can ask it here.}

\begin{acknowledge}
The authors thank Anush Tserunyan for pointing out that in the Connes-Feldman-Weiss theorem (see \cite{cfw}), unweighted F\o lner sets suffice in the (weighted) quasi-pmp setting, and suggested looking at Marks's proof in \cite{marks.cfw}.
%
%In this vein, the authors owe an intellectual debt to Andrew Marks, as the proof of \cref{prop:isoperimetric} follows the same outline as in \cite{marks.cfw}.
%
We are grateful to Anton Bernshteyn for pointing out that one can use the construction in Theorem 4.8 (a) of \cite{basymd} to produce toasts and not just witnesses to $asi=1$ in actions of polynomial growth groups.
We thank Antoine Gournay
for pointing out that \cref{unif connec} fails for some amenable groups.
Finally, the authors thank their advisors, Marcin Sabok and Anush Tserunyan, for their support and feedback.
\end{acknowledge}

\section{Preliminaries}

Throughout let $(X,\tau)$ be a Polish space with Borel probability measure $\mu$, and let $U_i$ be a countable basis for the open sets in $\tau$. Let $X^{<\omega}$ denote the space of finite subsets of $X$.

Given a graph $G$, we fix the following notation.
\begin{itemize}
    \item $V(G)$ is the set of vertices in $G$.
    \item $E(G)$ is the set of edges in $G$.
    \item For $Y\subseteq V(G)$, $E(Y)$ denotes the set of edges in the induced subgraph of $G$.
    \item $\rho_G: V(G)^2 \rightarrow [0, \infty]$ is the \textbf{graph metric}, where $\rho_G(x,y)=\infty$ means that $x$ and $y$ are in different connected components. 
    \item If $C\subset X$:
\begin{itemize}
    \item $B_n(C) \defeq \left\{x \in X : \rho_G(x, C) \leq n \right\}$ 
    \item
    $\partial_n C \defeq B_n(C) - C$. In particular, we define the (outer) \textbf{boundary} of $C$ to be $\partial C\defeq \partial_1C$. 
    
    \item
    
    The boundary visible from infinity, $\partial_{vis}C,$ is the subset of $\partial C$ so that each vertex in $\partial_{vis}C$ sees an infinite path avoiding $C.$
\end{itemize}
\end{itemize}

\begin{definition}
 If $B \subset V(G) $ and $n$ is a natural number, consider two points related if they are in $B$ and within distance $n$, and take the generated equivalence relation $E_n$.
We say $C$ is an \textbf{$n$-component} of $B$ if it is an equivalence class of $E_n$.
\end{definition}

Given a collection $\mathcal{T}\subseteq X^{<\omega},$ let $\mathcal{T}_1 \subset \mathcal{T}$ denote the family of minimal sets (ordered by containment), $\mathcal{T}_2$ 
denote the family of minimal sets in $\mathcal{T} \setminus \mathcal{T}_1,$ etc.

\subsection{Connected toasts and one-ended spanning trees}
\hspace{0.5em}\\

Here we define connected toasts and show that they can be used to produce one-ended spanning trees.

\begin{definition}\label{toast}
  Given a Borel graph $G$ on X, we say that a Borel collection $\mathcal{T}\subset X^{<\omega}$ is a \textbf{toast} if it %$\mathcal{T}$\subseteq V(G)^{<\omega}$ 
  satisfies

\begin{enumerate}[(T1)]
    \item \label{toast:exhaustive} {$\bigcup_{K\in \mathcal{T}}E(K)=E(G)$,} and

    \item \label{toast:disjoint} {for every pair $K, L \in \mathcal{T}$ either $B_1(K) \cap L = \emptyset$ or $B_1(K) \subseteq L$, or }$B_1(L) \subseteq K$.

%    \item{for every pair $K, L \in \mathcal{C}$ if $L \subseteq K$ then also $\partial(L) \subseteq K$, and}

\end{enumerate}

If \labelcref{toast:disjoint} holds but possibly not \labelcref{toast:exhaustive}, we call $\mathcal{T}$ a \textbf{partial toast}.
We say a toast is \textbf{connected} if

\begin{enumerate}[(T3)]
    \item \label{toast:connected}{for every $K \in \mathcal{T}$ the induced subgraph on $K \setminus \bigcup_{K \supsetneq L \in \mathcal{T}} L$ is connected.}
\end{enumerate}

We say a toast is \textbf{layered} if

\begin{enumerate}[(T4)]
    \item \label{toast:layered}for each $i\ge 1$, $\bigcup \mathcal{T}_i\subseteq \bigcup \mathcal{T}_{i+1}$.
\end{enumerate}

We say that $G$ \textbf{admits a toast a.e. (resp., generically)} if there exists a Borel $G$ invariant conull (resp., comeagre) set $X'\subseteq X$ such that there exists a toast for the graph $G|_{X'}$. We define admitting a connected toast, admitting a layered toast, and admitting a layered connected toast a.e. (resp., generically) analogously.
\end{definition}

\begin{prop}\label{toast_implies_tree}
If $G$ admits a connected toast then it admits a Borel one-ended spanning tree.
\end{prop}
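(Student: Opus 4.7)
My plan is to build $T$ directly from $\mathcal{T}$ in a hierarchical, Borel fashion. For each $K \in \mathcal{T}$ define its \emph{shell} $K^\circ \defeq K \setminus \bigcup_{L \in \mathcal{T},\, L \subsetneq K} L$; by (T3) the induced graph $G[K^\circ]$ is connected, so using a fixed Borel linear order on $X$ I Borel-select a spanning tree $T_K^\circ$ of $G[K^\circ]$. Whenever $L \in \mathcal{T}$ admits a \emph{parent} $p(L)$, defined as the $\subseteq$-minimal toast element strictly containing $L$, I also Borel-select an edge $e_L$ joining $L$ to $p(L)^\circ$. My candidate tree is then
\[
T \defeq \bigcup_{K \in \mathcal{T}} T_K^\circ \;\cup\; \bigl\{\, e_L : L \in \mathcal{T} \text{ has a parent}\,\bigr\}.
\]

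The crucial consequence of (T2), which I would extract first as a small lemma, is that every edge from $L$ to $p(L)^\circ$ automatically has its $L$-endpoint in $L^\circ$. Indeed, if $u \in L$ were to lie in some strict toast subset $L'' \subsetneq L$, then (T2) applied to $L''$ and $L$ forces $B_1(L'') \subseteq L$ (the other two alternatives are excluded by $u \in L'' \cap L$ and $L \not\subseteq L''$), so any neighbour of $u$ would lie in $L$, contradicting $p(L)^\circ \cap L = \emptyset$. The same argument shows $\partial L \subseteq p(L)^\circ$, which yields both the existence of $e_L$ (whenever $\partial L \ne \emptyset$) and, via the chain observation below, the existence of parents inside infinite components.

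Using that any two toast elements sharing a vertex are $\subseteq$-comparable by (T2), I would observe that the toast elements containing any fixed vertex $x$ form a chain whose union equals the whole connected component of $x$; in particular, every finite subset of a component lies in some $K \in \mathcal{T}$. A straightforward induction on the depth of $K$ (the length of the longest chain of toast elements strictly inside $K$) then shows that $T|_K$ is a spanning tree of $K$: the children $L_1,\dots,L_r$ of $K$ are pairwise $B_1$-disjoint by (T2), each $T|_{L_i}$ spans $L_i$ by induction, $T_K^\circ$ spans the disjoint shell $K^\circ$, and the bridging edges $e_{L_i}$ join these pieces into a single tree. Taking a union over an exhausting chain of $K$'s gives that $T$ is a Borel spanning tree on each component of $G$.

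The main obstacle is verifying one-endedness. For any finite $F$ in an infinite component $C$, enclose $F$ inside some $K \in \mathcal{T}$; it suffices to prove that $T \setminus K$ has exactly one connected component, since reattaching the finitely many vertices of $K \setminus F$ preserves this for infinite components. Because $T|_K$ is a connected subtree of $T$, the number of components of $T \setminus K$ equals the number of edges of $T$ with exactly one endpoint in $K$. Using the shell placements $u_L \in L^\circ$ and $v_L \in p(L)^\circ$ together with a short case split on how each $L \in \mathcal{T}$ relates to $K$ in the toast partial order, I expect to show that the only such cut edge is $e_K$ itself. Hence $T \setminus K$ is a single infinite component, and the proposition follows.
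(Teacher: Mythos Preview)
Your construction is exactly the paper's: span each shell $K^\circ$ by a tree and add one bridging edge per tile to the outside, and your careful verification that $e_L$ has endpoints in $L^\circ$ and $p(L)^\circ$ is precisely what makes the cut-edge count go through. The only cosmetic difference is in certifying one-endedness: the paper simply orients every edge of the shell tree of $K$ toward the bridging edge $e_K$ (giving each vertex out-degree one, hence a one-ended orientation), whereas you instead remove a tile $K$ and count that $e_K$ is the unique $T$-edge leaving it---both arguments are short and equivalent.
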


\begin{proof}
For each tile $K\in\mathcal{T}$ we may find a tree that spans $K \setminus \bigcup_{K \supsetneq L \in \mathcal{T}} L$ by \labelcref{toast:connected}.  Let $T'$ be the (component finite) spanning forest produced this way.  We may extend this by selecting a vertex $v_K\in \partial K$ with neighbor $u\notin K$ and adding edge $\{u,v_K\}$ to $T'$ for each $K\in\mathcal{T}.$  Let $T$ be the tree produced this way, and note that directing edges in $K$ towards $v_K$ witnesses that $T$ is one-ended.  Further, $T$ is component spanning by \labelcref{toast:exhaustive,toast:disjoint}. 
\end{proof}

\begin{remark}
In the measurable case the reverse implication also holds, i.e. the existence of one-ended spanning trees  ensures the existence of connected toasts a.e.; see \cite{bowen.kun.sabok}.  We do not know if this is the case in the Borel or Baire measurable settings as well.
\end{remark}

%%%%%%%%%%%%%%%%%%%%%%%

\subsection{Full sets}

\begin{definition}
  Call a subset $C \subset G$ \textbf{full} if its complement $C^c$ is connected. 
\end{definition}

\begin{definition}\label{def:filling}
    If a subset $C \subset V(G)$ is not full and its complement has exactly one infinite connected component, we call the \textbf{filling} of $C$, denoted $\widehat{C}$, the union of $C$ and the finite connected components of $C^c$ or equivalently the complement of the unique infinite connected component of $C^c$. 
\end{definition}

\begin{remark}\label{fullremark}
For any set $C$ that can be filled,
\begin{itemize}
    \item $\widehat{C}$ is full.
    \item $C$ contains the inner boundary of $\widehat{C}$. In particular, if $x \in \widehat{C}$ (resp. $\widehat{C}^c$) and $B_n(x) \not\subset \widehat{C}$ (resp. $\widehat{C}^c$), then $B_n(x) \cap C \neq \emptyset$.
\end{itemize}
\end{remark}

%%%%%%%%%%%%%%%%%%%%%%%
\subsection{The Radon--Nikodym cocycle}
\hspace{0.5em}\\

We say that a countable Borel equivalence relation $E$ on $(X,\mu)$ is \textbf{$\mu$-preserving} (resp. \textbf{null-preserving}) if for any partial Borel injection $\gamma : X \partialto X$ with $\graph(\gamma) \subseteq E$, $\mu(\dom(\gamma))=\mu(\im(\gamma))$ (resp. $\dom(\gamma)$ is $\mu$-null if and only if $\im(\gamma)$ is $\mu$-null). 

We note that we may always replace $\mu$ with $\nu\defeq \sum_{m \geq 1} 2^{-m} (\gamma_m)_* \mu$, where $(\gamma_m)_{m \ge 1}$ is a sequence of Borel automorphisms such that $E = \bigcup_{m \geq 1} \text{Graph}(\gamma_m)$, which exists by the Feldman--Moore theorem \cite{feldman.moore}. Now $E$ is $\nu$-null-preserving, and $\mu=\nu$ on every $E$-invariant set. Thus, without loss of generality, assume that $E$ is null-preserving.

By Section 8 in \cite{kechris.miller}, a null-preserving $E$ admits an a.e.~unique \textbf{Radon--Nikodym cocycle} with respect to $\mu$. Being a \textbf{cocycle} for a function $(x,y) \mapsto \coc_x(y) : E \to \R^+$ means that it satisfies the cocycle identity: 
\[
\coc_x(y) \coc_y(z) = \coc_x(z),
\]
for all $E$-equivalent $x,y,z \in X$. With this in mind, we think of $\coc_x(y)$ as the `weight' of $y$ relative to $x$.

We say that $\coc$ is the \textbf{Radon--Nikodym cocycle} of $E$ with respect to $\mu$ if it is Borel (as a real-valued function on the standard Borel space $E$) and for any partial Borel injection $\gamma : X \partialto X$ with $\graph(\gamma) \subseteq E$ and $f \in L^1(X,\mu)$,
\begin{equation}\label{cocycle} 
\int_{\im(\gamma)} f(x)\, d\mu(x) = \int_{\dom(\gamma)} f(\gamma(x))\coc_x(\gamma(x))\, d\mu(x).
\end{equation}

For example, if $E$ is the orbit equivalence relation of a group action $\Gamma\curvearrowright(X,\mu)$ and $\gamma\in\Gamma$, then for all measurable $A\subseteq X$,
\begin{equation} 
\mu(\gamma\cdot A) = \int_{A} \coc_x(\gamma(x))\, d\mu(x).
\end{equation}

We will use the following lemma in the proof of \cref{toast a.e.}. 
Its statement is technical, but this says that if a set $A$ is sufficiently small and we have a Borel family $\mathcal{C}$ satisfying certain boundedness properties, we may discard sets from $\mathcal{C}$ who interfere with $A$ without losing a set of significant measure.

\begin{lemma}\label{lemma:cocycle}
Let $\w:E_G\to \mathbb{R}^+$, $(x,y)\mapsto \w_x(y)$ be the Radon-Nikodym cocycle of $E_G$ with respect to $\mu$.
Let $A$ be a Borel set with $\mu(A)<\varepsilon,$ and $\mathcal{C}\subset X^{<\N}$, $\mathcal{B}\defeq\{B_C\supseteq C:C\in\mathcal{C}\}$ Borel families such that for some finite $B\subseteq G$ with $|B|=k$ and $n\in\N$, for each $x\in B_C\in\mathcal{B}$ and $y\in BB^{-1}x$, $B_C\subseteq B\cdot x_C$ for some $x_C\in C$ and $\w_{y}(x)\leq n$.  Then $$\mu(\bigcup\{C: B_C\cap A\neq \emptyset\})< k^2n\ep.$$
\end{lemma}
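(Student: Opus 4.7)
The plan is to exploit the covering $\bigcup\{C:B_C\cap A\neq\emptyset\}\subseteq BB^{-1}\cdot A$, and then bound each translate $b\cdot A$ using the cocycle identity.

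First, I would verify the covering. Fix any $C$ with $B_C\cap A\neq\emptyset$ and choose $a\in B_C\cap A$. By hypothesis, there is some $x_C\in C$ with $B_C\subseteq B\cdot x_C$, so in particular $a\in B\cdot x_C$, giving $x_C\in B^{-1}\cdot a$. Therefore
\[
C\subseteq B_C\subseteq B\cdot x_C\subseteq BB^{-1}\cdot a\subseteq BB^{-1}\cdot A,
\]
and writing $U\defeq \bigcup\{C:B_C\cap A\neq\emptyset\}$ we have $U\subseteq \bigcup_{b\in BB^{-1}}b\cdot A$. Note $|BB^{-1}|\le k^2$.

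Next, I would use the cocycle identity \labelcref{cocycle} to bound each $\mu(bA\cap U)$. For $b\in BB^{-1}$, take the partial Borel injection $\gamma_b:A\cap b^{-1}U\to bA\cap U$ given by $x\mapsto bx$ (well-defined because $E_G$ contains the translation). Then
\[
\mu(bA\cap U)=\int_{A\cap b^{-1}U}\w_x(bx)\,d\mu(x).
\]
For each $x$ in the domain, $bx\in U$ lies in some $C\subseteq B_C$ with $B_C\cap A\neq\emptyset$; since $BB^{-1}$ is symmetric, $x=b^{-1}(bx)\in BB^{-1}\cdot bx$, so the hypothesis applied at the point $bx\in B_C$ with $y\defeq x$ gives $\w_x(bx)=\w_y(bx)\le n$. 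Hence $\mu(bA\cap U)\le n\,\mu(A)<n\ep$.

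Finally, summing over the at most $k^2$ elements of $BB^{-1}$,
\[
\mu(U)\le\sum_{b\in BB^{-1}}\mu(bA\cap U)<k^2 n\ep.
\]
The only delicate point is parsing the hypothesis correctly — identifying that the estimate $\w_y(x)\le n$ for $y\in BB^{-1}x$ is exactly what is needed to bound the pushforward densities $\w_x(bx)$ appearing in the cocycle integral, after using the symmetry of $BB^{-1}$. Once this is noted the argument is a short double application of the covering bound and the Radon--Nikodym formula, with no further descriptive set-theoretic input needed.
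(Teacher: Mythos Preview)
Your proof is correct and follows essentially the same approach as the paper: both arguments use that $U\subseteq BB^{-1}\cdot A$ (you state this covering explicitly, while the paper encodes it via the inequality $\mathbb{1}_U(x)\le\sum_{y\in B_C}\mathbb{1}_A(y)$) and then apply the Radon--Nikodym formula together with the bound $\w_y(x)\le n$ to control each of the at most $k^2$ translates. Your presentation is arguably cleaner, separating the set-theoretic covering from the measure estimate, but the underlying idea is identical.
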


\begin{proof}
We will use the fact that if $x,y\in B_C\in\mathcal{B}$, then by assumption, $y=\gamma\cdot x$ for some $\gamma\in BB^{-1}$. We compute:
\begin{align*}
    \mu(\bigcup\{C\in\mathcal{C}: B_C\cap A\neq \emptyset\})
    &= \int \mathbb{1}_{\bigcup\{C\in\mathcal{C}: B_C\cap A\neq \emptyset\}}(x)\;d\mu(x)
    \\
    &\le \int \sum_{y\in B_C:x\in C}\mathbb{1}_{\bigcup\mathcal{B}\cap A}(y)\; d\mu(x) 
    \\
    &= \int \sum_{y\in B_C:x\in C}\frac{\mathbb{1}_{\bigcup\mathcal{B}\cap A}(y)}{\w_x(y)}\w_x(y)\; d\mu(x) \\
    \eqcomment{by \cref{cocycle}} 
    &\le \sum_{\gamma\in BB^{-1}} \int  \mathbb{1}_{\bigcup\mathcal{B}\cap A}(x)\w_{\gamma^{-1}(x)}(x)\; d\mu(x) \\
    &\le |B|^2\cdot n \cdot \mu(A) <k^2n\ep. \qedhere
\end{align*}
\end{proof}

\section{Existence of connected toasts}

\subsection{Connected toasts generically}
\hspace{0.5em}\\

Here we prove that every one-ended bounded degree Borel graph admits a connected toast generically (which implies \cref{intro:comeagre} of \cref{trees} by \cref{toast_implies_tree}). 

\begin{definition}
  Given a sequence of sets $(K_i)_{i\in\omega}$ let $K_{<n}=\bigcup_{i< n}K_{i}.$
\end{definition}

We start with a simple observation:

\begin{lemma}\label{toast.tech}
If $\mathcal{T}$ is a toast and $\mathcal{S}\subseteq \mathcal{T}_{<n}$ is Borel, then $\mathcal{T}\setminus \mathcal{S}$ is a toast. 
\end{lemma}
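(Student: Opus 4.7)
My plan is to check the three requirements on $\mathcal{T} \setminus \mathcal{S}$: Borelness is immediate since both $\mathcal{T}$ and $\mathcal{S}$ are Borel, and \labelcref{toast:disjoint} is a pairwise condition that is automatically inherited by every subcollection of $\mathcal{T}$. All of the content therefore lies in verifying \labelcref{toast:exhaustive}, namely that every $e \in E(G)$ still lies in $E(K)$ for some $K \in \mathcal{T} \setminus \mathcal{S}$.

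Fix $e = \{u,v\}$ and set $\mathcal{T}_e \defeq \{K \in \mathcal{T} : e \in E(K)\}$. Since any two members of $\mathcal{T}_e$ share both endpoints, the disjoint alternative in \labelcref{toast:disjoint} is ruled out and one must contain the other; hence $\mathcal{T}_e$ is linearly ordered by inclusion. A short induction on $i$ using the definition of the levels shows that the top element of any strict chain $K_1 \subsetneq \cdots \subsetneq K_i$ in $\mathcal{T}$ lies in $\mathcal{T}_{\geq i}$. So once I know $\mathcal{T}_e$ is infinite, it contains a strict chain of length $n$, whose top is a tile in $\mathcal{T}_{\geq n} \subseteq \mathcal{T} \setminus \mathcal{S}$ that still covers $e$.

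The remaining and main task is to show that $\mathcal{T}_e$ is infinite, which I plan to do by contradiction. Suppose $\mathcal{T}_e$ has a maximum element $L$; then $L$ is actually maximal in all of $\mathcal{T}$, since any $L' \supsetneq L$ would contain $v \in L$ and hence both endpoints of $e$, placing $L' \in \mathcal{T}_e$ and contradicting maximality. For each vertex $w \in L$, applying \labelcref{toast:exhaustive} of $\mathcal{T}$ to every edge incident to $w$ produces a tile containing $w$, which by \labelcref{toast:disjoint} and the maximality of $L$ must sit inside $L$; the other endpoints of those edges therefore lie in $L$ as well. So $L$ is closed under taking neighbors in $G$, hence a union of connected components of $G$—impossible since $L$ is finite while every component of $G$ is infinite in the paper's standing one-ended setting. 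This final step is the only place where any graph structure beyond the toast axioms enters, and it is the real obstacle of the argument: without the one-endedness hypothesis the lemma would genuinely fail, as a finite component tiled by a single level-one tile could be destroyed by putting that tile into $\mathcal{S}$.
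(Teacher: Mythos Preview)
Your proof is correct. The paper actually states this lemma as a ``simple observation'' and gives no proof at all, so there is nothing to compare at the level of argument; you have supplied the details the authors omitted, and your identification of the one-endedness hypothesis as the crucial (and implicit) ingredient is exactly right.
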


%\begin{definition}
%   Given $f:\omega\rightarrow \omega$ a \textbf{connected toast +} is a Borel family $\mathcal{T}\subset X^{<\omega}$ such that:
  
 % \begin{enumerate}
    % \item{$\bigcup_{K\in \mathcal{T}}E(K)=E(G)$,} 
    % \item For every $K\in\mathcal{T}$ there are connected sets $K=K_0,K_1,...,K_{f(|K|)}$ such that $K_i\subset X\setminus K_{i-1}^+$ and $N^{-}(K_{i-1}^+)\subseteq K_i$

  %  \item{for every pair $K, L \in \mathcal{T},$ either $K_{f(|K|)}^+ \cap L_{f(|L|)}^+ = \emptyset$ or there is a $j\leq f(|L|)$ with $K_{f(|K|)}^+ \subseteq L_j$, or } there is an $i\leq f(|K|)$ with $L_{f(|L|)}^+ \subseteq K_i.$

%    \item{for every pair $K, L \in \mathcal{C}$ if $L \subseteq K$ then also $\partial(L) \subseteq K$, and}

  %  \item{for every $K \in \mathcal{T}$ and $i\leq f(|K|)$ the induced subgraph on $K_i \setminus \bigcup_{K \supsetneq L \in \mathcal{T}} L_{f(|L|)}^+$ is connected.}
    
%end{enumerate}
%\end{definition}

\begin{theorem}\label{toast generic}
Any locally finite one-ended Borel graph $G$ on $X$ admits a connected toast generically.
\end{theorem}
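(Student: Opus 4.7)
The strategy is to build a layered connected toast on an invariant comeagre Borel set, using generic hyperfiniteness together with the filling operation from one-endedness. First, every locally finite Borel graph is hyperfinite on an invariant comeagre Borel set (a standard Baire-category result), so we may fix on such a set a Borel increasing sequence $E_0\subseteq E_1\subseteq\cdots$ of component-finite subequivalence relations with $\bigcup_n E_n=E_G$. Refining each $E_n$ to the equivalence relation whose classes are the $G$-connected components of the old $E_n$-classes (which remain finite and Borel), we may further assume all $E_n$-classes are $G$-connected. Passing to a subsequence, I would arrange that $B_1([x]_{E_n})\subseteq [x]_{E_{n+1}}$ for every $x$ and $n$, so successive classes nest with a one-boundary buffer.

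Next, for each $E_n$-class $C$ one-endedness and finiteness give a well-defined Borel filling $\widehat{C}$, and since $C$ is $G$-connected, each finite hole of $C^c$ is adjacent to $C$, so $\widehat{C}$ is $G$-connected as well. A short case analysis on points of $B_1(\widehat{C})$ — using that any neighbor of a point in $\widehat{C}\setminus C$ that leaves $\widehat{C}$ must also leave the hole, hence be in $C$ — yields $B_1(\widehat{C})\subseteq\widehat{D}$ whenever $C\subseteq D$ are consecutive-level classes, giving the cross-level half of \labelcref{toast:disjoint}. Distinct $E_n$-classes $C_1, C_2$ can still have $B_1(\widehat{C_1})\cap\widehat{C_2}\neq\emptyset$, e.g.\ when a small class sits in a hole of another; for these I would merge iteratively to a transitive closure, producing a single tile $\widehat{C_1\cup C_2\cup\cdots}$. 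The rapid buffer and local finiteness bound the merging locally and ensure the result is still a finite $G$-connected set contained in a single $(n+1)$-level tile, so the resulting Borel family $\mathcal{T}=\bigsqcup_n\mathcal{T}_n$ is a layered (unconnected) toast on the comeagre set.

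To upgrade to \labelcref{toast:connected}, for each tile $K\in\mathcal{T}$ the annular piece $K\setminus\bigcup_{K\supsetneq L\in\mathcal{T}}L$ must be $G$-connected. If it splits into components, some lower-level sub-tile $L$ lies between them, and my plan is to prune such obstructing sub-tiles from $\mathcal{T}$ in a Borel recursion on level using \cref{toast.tech}. This preserves \labelcref{toast:exhaustive} by the layered structure (the pruned edges are covered by higher levels) and \labelcref{toast:disjoint} (which is closed under removal). The main obstacle is precisely this last phase: one must verify that the pruning can always be arranged Borel-measurably and that finitely many well-chosen removals suffice to connect up the ring. Here fullness of each $\widehat{K}$ is essential — because $\widehat{K}^c$ is connected, the potential disconnection of the annulus is always caused by a bounded set of sub-tiles lying across a separating path inside $\widehat{K}$, which one can identify and discard. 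The output is a Borel connected layered toast on an invariant comeagre set, and \cref{toast_implies_tree} then yields the one-ended spanning tree of \cref{intro:comeagre}.
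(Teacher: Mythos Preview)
Your approach diverges from the paper's and has a genuine gap in the final phase.

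The minor issue first: ``passing to a subsequence'' to arrange $B_1([x]_{E_n})\subseteq[x]_{E_{n+1}}$ for every $x$ does not work, because the level $m$ for which $B_1([x]_{E_n})\subseteq[x]_{E_m}$ depends on $x$ and in general admits no uniform bound across all components. This is repairable --- one can instead cite the generic existence of a toast directly, as the paper does --- so it is not the main obstruction.

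The serious gap is your pruning step for \labelcref{toast:connected}. You assert that finitely many removals of obstructing sub-tiles suffice to make each annulus $K\setminus\bigcup_{L\subsetneq K}L$ connected, appealing to fullness of $K$. But fullness does not give this: even when $K$ and $L$ are both connected and full with $B_1(L)\subseteq K$, the set $K\setminus L$ can be disconnected. For instance, in any one-ended graph containing an induced path $v_0,\dots,v_{10}$ whose endpoints $v_0,v_{10}$ are the only vertices of the path adjacent to the rest of the graph, take $K=\{v_0,\dots,v_{10}\}$ and $L=\{v_5\}$; both are full and $B_1(L)\subseteq K$, yet $K\setminus L$ splits in two. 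Worse, removing $L$ from $\mathcal{T}$ promotes the sub-tiles of $L$ to direct sub-tiles of $K$, and the same obstruction can recur at the next level down; in a layered toast this cascades through all levels, and removing every obstructing tile ultimately empties $\mathcal{T}$. Nothing in your argument bounds this recursion.

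The paper's proof sidesteps this entirely. It also starts from an arbitrary (not connected) toast $\mathcal{T}$, but rather than pruning $\mathcal{T}$ it builds a connected partial toast $\mathcal{A}_0\subseteq\mathcal{A}_1\subseteq\cdots$ one tile at a time. The key point is that one-endedness of the \emph{remaining} graph $G\setminus\mathcal{A}_{<n}$ guarantees that every tile $L$ is eventually contained in some (possibly much larger) tile $K\in\mathcal{T}$ with $K\setminus L$ connected there; a Baire category argument then extracts a uniform covering level on a non-meagre set, and one covered tile is selected per maximal outer tile. The freedom to let $K$ range over arbitrarily high levels of $\mathcal{T}$ is exactly what your fixed adjacent-level scheme lacks, and it is where one-endedness is actually used.
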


\begin{proof}
Such graphs are already known to contain toasts by \cite{btoast} Proposition 6.1.  Let $\mathcal{T}$ be such a toast.  Let $(\gamma_i)_{i\in \omega}$ be a countable set of Borel involutions that generate $E_G,$ and $h:\mathbb{N}^2\rightarrow\mathbb{N}$ a bijection. Recall that we have fixed a countable basis $\set{U_i}$ for $\mathcal{T}$. 

We inductively define Borel partial connected toasts $\mathcal{A}_n\subset \mathcal{T}$ so that:

\begin{enumerate}
    \item $\gamma_i(\bigcup\mathcal{A}_{h(i,j)})$ is non-meagre in $U_j.$
    \item For each $n$, $\mathcal{A}_{<n}$ satisfies \labelcref{toast:disjoint,toast:connected}.
    \item $G\setminus \mathcal{A}_{<n}$ is one-ended and $\mathcal{A}_{<n}\subseteq \mathcal{T}_{<m}$ for some $m$.
   
\end{enumerate} 

To see that we can use this to construct the desired connected toast, note that by property (1) $\gamma_i(\bigcup_n\mathcal{A}_n)$ is comeagre for each $i,$ and thus $A=\bigcap_i\gamma_i(\bigcup_n\mathcal{A}_n)$ is comeagre as well.  Moreover, this set is $G$-invariant as $\bigcup_i\gamma_i(x)\subset A$ and is the $G$ component of $x$ for each $x\in A.$ From this and property (2) of the construction we see that $\{K\in \bigcup_n\mathcal{A}_n: K\subset A\}$ is the desired connected toast.

Now, let $\mathcal{A}_0=\emptyset$ and suppose we have found $\mathcal{A}_n.$  Let $m$ be as in property (3) of the construction and consider $\mathcal{T}'=\mathcal{T}\setminus \mathcal{T}_{<m}.$ This is a toast by Lemma \ref{toast.tech}, and so there exists some $n'$ such that $\gamma_i(\mathcal{T}'_{<h^{-1}(n')})$ is non-meagre in $U_j.$ Notice that for every $K\in \mathcal{T}'$ and $L\in \mathcal{A}_{\leq n},$ either $(L\cup \partial(L))\cap K=\emptyset$ or $L\cup \partial(L)\subset K$ and $K\setminus L$ is connected by our choice of $m.$   

Let $G'=G\setminus \mathcal{A}_{\leq n}.$ This graph is one-ended by property (3) of our construction, and so for every $L\in \mathcal{T}'_{<n'}$ there is an $m'\in\omega$ and $L\subset K\in \mathcal{T}'_{<m'}$ such that $K\setminus L$ is $G'$ connected.  Here, we say $L$ is covered by level $m',$ and observe that for some $m'$ $$\gamma_i(\{x\in L\in \mathcal{T}_{<n'}: \textnormal{ L is covered by level m}\}) $$ is non-meagre in $U_j.$  Applying the Baire category theorem, we may choose $\mathcal{A}_{n+1}$ to be a single covered tile from every maximal $\mathcal{T}'_{<m'}$ tile.  \qedhere

\end{proof}

\begin{remark}\label{hcomb tree}
    We could use a similar algorithm to construct a connected toast (and subsequently a one-ended spanning tree) in any one-ended highly computable graph.  This is outside the scope of the present paper, but it adds to the correspondence between Baire measurable and highly computable combinatorics recently explored by Qian and Weilacher in \cite{qian2022descriptive}.
\end{remark}

\subsection{Connected toasts a.e. in group actions }
\hspace{0.5em}\\

 In this section our goal is to prove that free Borel actions of one-ended amenable groups admit connected toasts a.e..

\begin{theorem}\label{toast a.e.}
Let $\Gamma$ be a finitely generated one-ended amenable group with finite generating set $S,$ and let $G=G(S,a)$ be the Schreier graph of a free null-preserving action $\Gamma\acts^a X.$  Then $G$ admits a layered connected toast a.e.. 
\end{theorem}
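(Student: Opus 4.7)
The plan is to build a layered connected toast $\mathcal{T}=\bigsqcup_n \mathcal{T}_n$ inductively, with each $\mathcal{T}_n$ drawn from a Borel approximate tiling of $X$ by translates of a carefully chosen F\o lner set $F_n\subseteq \Gamma$. First, I would fix a F\o lner sequence $(F_n)_n$ in $\Gamma$ such that (i) each $F_n$ is full in the Cayley graph $\mathrm{Cay}(\Gamma,S)$; (ii) $F_n\supseteq B_{r_n}(\{e\})$ with $r_n\to\infty$; and (iii) the ``annulus'' $F_{n+1}\setminus B_{r_n}(F_n)$ remains large relative to $|F_{n+1}|$. One-endedness of $\Gamma$ is used twice here: first to fill F\o lner sets (absorbing the bounded components of the complement without inflating cardinality much), and second to guarantee that whenever translates $\gamma_i F_n$ are packed well inside a larger $\gamma F_{n+1}$, the residue $\gamma F_{n+1}\setminus \bigcup_i \gamma_i F_n$ is connected --- exactly what \labelcref{toast:connected} demands of the eventual tiles.

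Next, I would apply a Borel, null-preserving version of the Ornstein--Weiss / Connes--Feldman--Weiss quasi-tiling argument (as noted in the acknowledgements, unweighted F\o lner sets suffice in the quasi-pmp setting) to obtain, for each $n$, a Borel partial tiling of $X$ by $F_n$-translates that covers all but a small-measure subset. To handle the Radon--Nikodym cocycle $\w$, I would first restrict to the Borel set $X_n\defeq \{x: \w_y(x)\le M_n \text{ for all } y\in F_nF_n^{-1}\cdot x\}$, on which the cocycle is uniformly bounded; by null-preservation the constants $M_n$ can be chosen so that $\sum_n \mu(X\setminus X_n)<\infty$.

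The induction runs as follows. Given $\mathcal{T}_{\le n}$ as a layered connected partial toast on a set of measure at least $1-\varepsilon_n$, take the level-$(n+1)$ quasi-tiling $\mathcal{P}_{n+1}$ and form $\mathcal{T}_{n+1}$ by keeping a tile $K$ whenever (a) $K\subseteq X_{n+1}$, (b) $K$ contains the $1$-neighbourhood of every $\mathcal{T}_{\le n}$-tile it meets, and (c) $K$ does not cut any previous tile across its boundary. Each surviving $K$ then has connected residue by the choice of $F_{n+1}$, so \labelcref{toast:connected} holds, and (b) gives \labelcref{toast:disjoint} plus the layered condition \labelcref{toast:layered}. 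The collection of discarded tiles is controlled by \cref{lemma:cocycle} applied with $B=F_{n+1}F_{n+1}^{-1}$ and $A$ the union of $X\setminus X_{n+1}$ with the (F\o lner-small) set of $\mathcal{T}_{\le n}$-tile boundaries that would violate (b) or (c); this yields a bound of the form $|F_{n+1}|^2 M_{n+1}\cdot\varepsilon_{n+1}'$, which can be made summable by choosing $\varepsilon_{n+1}'$ small. The union $\bigcup_n \mathcal{T}_n$ then forms a layered connected toast on a conull $\Gamma$-invariant Borel subset of $X$.

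The main obstacle is coordinating connectedness with the quasi-pmp setting. In the pmp case, any fixed F\o lner translate has small expected boundary, so bad tiles near the boundary contribute negligibly; once the action is only null-preserving, $\w$ can inflate the measure of such boundary errors without bound. The remedy is precisely \cref{lemma:cocycle}, which shows that once the cocycle is uniformly bounded by $M_n$ on the relevant region, the measure of tiles whose $F_n$-translates touch a bad set $A$ is still controlled linearly in $\mu(A)$. Combined with one-endedness (for fullness of the F\o lner sets and for connected residues after removing the previous layer), this is what makes the inductive construction survive outside the pmp world.
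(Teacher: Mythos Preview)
There is a genuine gap in the step that is supposed to secure \labelcref{toast:connected}. You assert that one-endedness ``guarantee[s] that whenever translates $\gamma_i F_n$ are packed well inside a larger $\gamma F_{n+1}$, the residue $\gamma F_{n+1}\setminus \bigcup_i \gamma_i F_n$ is connected.'' But fullness of $\gamma_i F_n$ only says that $\Gamma\setminus \gamma_i F_n$ is connected; it says nothing about $\gamma F_{n+1}\setminus \gamma_i F_n$. Concretely, already in $\Gamma=\Z^2$ one can take $F_{n+1}$ to be a ``dumbbell'' --- two $M\times M$ squares joined by a corridor of width $k$ and length $L$ --- which is connected, full, contains a ball of radius $M/2$, and is F{\o}lner when $M,L\to\infty$. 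A single translate of a $k\times k$ square $F_n$ (itself full and F{\o}lner) placed on the corridor and well inside $F_{n+1}$ disconnects the residue. Nothing in your hypotheses on the $F_n$ rules this out, and for general one-ended amenable $\Gamma$ there is no uniform ``boundary-connectivity'' constant to fall back on: as noted in the acknowledgements, \cref{unif connec} fails for some amenable groups, so the device used in \cref{connectoastborel} is unavailable here.

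The paper circumvents this with a different F{\o}lner-type object. Rather than full F{\o}lner sets, \cref{connected folner} produces pairs $F_n\subset B_n$ with $|B_n\setminus F_n|/|F_n|\to 0$, $\partial F_n\subseteq B_n\setminus F_n$, and --- crucially --- $B_n\setminus F_n$ \emph{connected}. This ``connected shell'' is exactly what makes the residue connected after packing, and its existence is far from obvious: the proof bootstraps from the pmp case, using the one-ended spanning trees of \cite{cgmtd} in an auxiliary free pmp action to locate such tiles. These shells are then fed into the connected isoperimetric constant $\Phi^c$ of \cref{def:iso}; \cref{prop:isoperimetric} gives $\Phi^c_H(G)=0$ on every positive-measure $H$, and a measure-exhaustion argument (via \cref{exhaustion}) assembles the toast. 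Your use of \cref{lemma:cocycle} to bound the measure of discarded tiles, and the overall inductive refinement with summable errors, is correct and mirrors the paper's endgame --- what is missing is a replacement for \cref{connected folner} that actually delivers the residue connectivity you need.
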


Our main combinatorial tool is the construction of F\o lner sequences that witness the group's end structure.

\begin{lemma}\label{connected folner}
Let $\Gamma$ be a finitely generated one-ended amenable group with finite generating set $S$. The Cayley graph of $\Gamma$ admits a sequence of finite sets $F_n\subset B_n$ such that 
\begin{enumerate}[(1)]
    \item\label{Folner1} $\frac{|B_n-F_n|}{|F_n|}\rightarrow 0.$
    \item\label{Folner2} $\partial F_n\subseteq B_n-F_n.$
    \item\label{Folner3} $B_n-F_n$ is connected.
\end{enumerate}
\end{lemma}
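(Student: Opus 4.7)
The plan is to combine amenability (which yields F\o lner sets with arbitrarily small boundary) with one-endedness (which permits a filling argument to make the complement connected).

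I would begin with any F\o lner sequence $\{E_m\}$ in $\Gamma$, translated so that each $E_m$ contains the identity. By one-endedness, for all sufficiently large $m$ the complement $\Gamma \setminus E_m$ has exactly one infinite connected component. Replacing $E_m$ by its filling $\widetilde{E}_m$ (in the sense of \cref{def:filling}) produces a finite set whose complement $\Gamma \setminus \widetilde{E}_m$ is connected, while the F\o lner ratio $|\partial \widetilde{E}_m|/|\widetilde{E}_m|$ still tends to $0$, since filling only removes vertices from the outer boundary.

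Next, I would match each $\widetilde{E}_m$ to a ball by setting $n = n(m) := \mathrm{diam}(\widetilde{E}_m) + 1$, which ensures $\widetilde{E}_m \subseteq B_{n-1}$ and hence $\partial \widetilde{E}_m \subseteq B_n$. Taking $F_n := \widetilde{E}_m$ then yields condition (2) directly, since $\partial F_n \cap F_n = \emptyset$. For condition (3), although $\Gamma \setminus \widetilde{E}_m$ is connected, its intersection $B_n \setminus \widetilde{E}_m$ with the ball may fail to be, so I would repair this with a second filling step internal to $B_n$, absorbing into $F_n$ any finite connected component of $B_n \setminus \widetilde{E}_m$. The resulting shell $B_n - F_n$ is then connected by construction, and condition (2) is preserved since the absorbed components lie strictly inside $B_n$.

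The main obstacle is establishing condition (1), the quantitative ratio $|B_n - F_n|/|F_n| \to 0$, which forces $F_n$ to constitute essentially all of $B_n$ by cardinality. In the polynomial-growth case balls are themselves F\o lner and the direct choice $F_n = B_{n-1}$ suffices (with minor modification to connect the sphere). For general amenable groups one must choose the F\o lner sets $E_m$ so that, after filling, they densely approximate the matching balls $B_{n(m)}$: this requires exploiting the freedom in the F\o lner condition to select near-ball-shaped sets and carefully calibrating the radii $n(m)$ against the diameter and cardinality of $\widetilde{E}_m$. This density-matching is the technically heaviest step and is the main place where amenability is used in a quantitatively strong way beyond the mere existence of F\o lner sets; one-endedness enters only through the two filling steps that guarantee the connectedness requirements.
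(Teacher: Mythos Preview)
Your approach has a genuine gap that you yourself flag but do not close: condition (1). You propose to pick F\o lner sets that ``densely approximate'' the matching metric balls, but this is impossible in general. Take a one-ended amenable group of exponential growth such as $\mathbb{Z}_2 \wr \mathbb{Z}^2$: balls grow exponentially, so any finite set $F \subseteq B_n$ with $|\partial F|/|F|$ small can occupy only a vanishing proportion of $B_n$, forcing $|B_n - F|/|F| \to \infty$ rather than $0$. No amount of ``calibrating radii'' fixes this. (There is also a likely misreading: from the paper's proof and its later use, the symbol $B_n$ in the lemma denotes a second sequence of finite sets to be produced, not the metric ball of radius $n$. Under that reading your task is to surround a filled F\o lner set by a small connected collar containing its boundary; but your filling argument gives no control on how long the paths connecting boundary components through the complement must be, and the paper's acknowledgments note that the uniform-boundary-connectivity constant of \cref{unif connec} fails for some amenable groups, so the obvious repair is blocked.)

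The paper's proof is completely different and, as the authors themselves remark, surprising: it proves this purely group-theoretic statement via descriptive set theory. One takes a free pmp action of $\Gamma$, invokes \cite{cgmtd} to get a Borel one-ended spanning tree a.e., orients it toward the end, and defines ``tiles'' as the sets of vertices below a given height. An $m$-tile $K$ then plays the role of $B_n$, while $F_n$ is the union of the small sub-tiles whose boundaries $K$ covers; connectedness of $K \setminus F_n$ is automatic from the tree structure, and a short measure-theoretic pigeonhole argument shows that tiles with $|K \setminus F_n|/|F_n| < \varepsilon$ exist. Amenability enters only through the existence of the pmp action and the cited spanning-tree result, not through any direct manipulation of F\o lner sets.
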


{  Note that properties (1) and (2) of the above are just the usual F\o lner condition. Also, it is possible (and from our construction typical) that the sets $F_n$ are not necessarily connected.

Surprisingly, despite Lemma \ref{connected folner} being purely group theoretic in nature, our proof of it relies almost entirely on ideas from descriptive set theory.}

\begin{proof}[Proof of Lemma \ref{connected folner}]
Consider a free pmp action of $\Gamma$ on a standard probability space $(X,\mu).$  The Schreier graph of this action contains a Borel one-ended spanning tree $T$ a.e. by \cite{cgmtd}.  Since $T$ is one-ended we can orient edges toward the unique infinite end in each component, and we say $u$ is \textbf{below} $v$, denoted \textbf{$u<v$}, if there is a directed path from $u$ to $v$.  Say that a vertex $v\in X$ is of \textbf{height} $0$ if it's a leaf in the tree, and then define height(v) := $\max_{u < v}\{ height(u) + 1\}$ (i.e., height $1$ if it's above leaves but not above any other vertices, etc.).  As the tree is a.e. one-ended, a.e. vertex is of finite height.  An $(<)n-tile$ is the set of vertices that are below a given vertex of height $(<)n$ in the tree.

For any given $n$-tile $H$ there is an $m$-tile  $K$ such that $\partial(H)\subseteq K$ for some $m.$  If this is the case, we'll say that $H$ is covered by level $m$ and that $K$ covers $H$.

Now, for fixed $m>n$ and $m$-tile $K$ let $$\good(K)= \{ v\in K: v\in H\textnormal{  is a } <n\textnormal{-tile and } K \textnormal{ covers } H\}.$$  

We will say that $K$ is $(\ep,n)$-good if $\frac{|K\setminus \good(K)|}{|K|}<\ep$.  Notice that any $(\varepsilon,n)$-good tile satisfies the conditions of Lemma \ref{connected folner}, since an $m$-tile minus a $<n$-tile is connected whenever $n<m.$  Therefore, it suffices to show that $(\ep,n)$-good tiles exist for some $n.$  In order to do this, we'll actually show that such tiles almost cover all of the graph.

\vspace{3mm}

\begin{claim}\label{good tiles}
For any $\ep,\ep'>0$ there are $n<m\in\mathbb{N}$ such that $1-\ep'$ of $G'$ is tiled by $(\ep,n)$-good $m$-tiles.  
\end{claim}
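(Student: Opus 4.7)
The plan is to combine two a.e.\ finiteness observations afforded by one-endedness of $T$ with a Markov-type averaging argument in the pmp setting.

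First, since $T$ is a.e.\ one-ended, every vertex has a.e.\ finite height, so the measure of vertices of height $\ge n$ tends to $0$ as $n\to\infty$.  I would choose $n$ so that this measure is less than a small constant $\eta$ depending on $\ep,\ep'$.  Second, for each vertex $v$ of height $<n$ let $H_v$ be the maximal $<n$-tile containing $v$.  Since $\partial H_v$ is finite and $G$ is one-ended, $H_v$ has finite covering level a.e., so I would pick $m>n$ large enough that the measure of vertices $v$ whose $<n$-tile $H_v$ fails to be covered at some level $\le m$ is also less than $\eta$.  An auxiliary pigeonhole on the heights attained along ancestor rays additionally lets me arrange that $m$ is a height realized by all but an $\eta$-measure set of vertices, so that most vertices lie in a height-$m$ tile $K$ which then contains $H_v\cup\partial H_v$ (i.e., $K$ covers $H_v$).

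For the averaging, call $v$ \emph{bad} if $v$ lies in an $m$-tile $K$ with $v\notin\good(K)$; by construction the bad set has $\mu$-measure $O(\eta)$.  Using pmp-invariance, integrating $\mathbb{1}_{v\text{ bad}}$ and rewriting it as an average over the $m$-tile $K(x)$ of $x$ gives $\int |K(x)\setminus\good(K(x))|/|K(x)|\,d\mu(x) = O(\eta)$, so Markov's inequality implies that the $\mu$-measure of vertices lying in $m$-tiles $K$ with $|K\setminus\good(K)|/|K|\ge\ep$ is at most $O(\eta/\ep)$.  Choosing $\eta\ll\ep\ep'$ then yields the claim.

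The main obstacle I anticipate is the subtlety that heights along ancestor rays in $T$ are strictly increasing but may skip integer values, so a priori not every $m\in\N$ is attained as a height on a positive-measure set of vertices.  Ensuring simultaneously that $m$ is realized as a height on almost every ancestor ray \emph{and} that the covering-level condition at this $m$ holds on almost all vertices of height $<n$ is the delicate part, and requires the joint pigeonhole/density argument indicated above (passing, if necessary, to a positive-density subset of $\N$ on which heights are attained and along which the covering-level measure tends to $1$).
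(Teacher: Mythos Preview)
Your approach is essentially the paper's: choose $n,m$ so that all but $\varepsilon_2$ of the measure lies in $<n$-tiles covered at level $m$, then average---your mass-transport-plus-Markov step is a repackaging of the paper's direct inequality $(1+\ep)(\varepsilon_1-\varepsilon_2)\le\varepsilon_1$, yielding the same bound $\varepsilon_1\le\varepsilon_2(1+\ep)/\ep$. The height-skipping obstacle you raise is legitimate (and the paper simply asserts the existence of suitable $n<m$ without addressing it), but your proposed joint pigeonhole is more delicate than needed: the cleanest resolution is to replace $m$-tiles by maximal ${\le}m$-tiles throughout, which nest in $m$ and exhaust a.e., making the choice of $n<m$ routine.
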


\begin{proof}
First note that for any $\varepsilon_2>0$ there is a choice of $n<m$ such that $1-\varepsilon_2$ of $G$ is contained in $<n$-tiles that are covered by level $m$.  Call such an $<n$-tile \textbf{good}.  Fix such an $\varepsilon_2$.  Suppose that $1-\varepsilon_1$ of the graph is tiled by $\ep$-good $m$ tiles.  Then at least $1-\varepsilon_2-(1-\varepsilon_1)=\varepsilon_1-\varepsilon_2$ of the graph is tiled by good $<n$-tiles that are not contained in $\ep$-good $m$ tiles.  Let $S$ be the set of all such $m$-tiles.  Then we know that $\mu(\bigcup_{K\in S}\good(K))\geq \varepsilon_1-\varepsilon_2.$ Moreover, by definition we know that for any $m$-tile $K\in S$ that $|K\setminus \good(K)|\geq \ep|K|\geq \varepsilon|\good(K)|$. So, using the previous two observations and the fact that the action is pmp we see that $\mu(S)\geq \ep(\varepsilon_1-\varepsilon_2)+\varepsilon_1-\varepsilon_2$.  Consequently, we know $$1-\varepsilon_1+(1+\ep)(\varepsilon_1-\varepsilon_2)\leq 1,$$ and thus $$\varepsilon_1\leq \frac{\varepsilon_2(1+\ep)}{\ep}.$$  Therefore, we can choose $n$ and $m$ large enough to make $\varepsilon_2$ small enough to ensure that $\varepsilon_1<\ep'$, as desired.
\end{proof}

\end{proof}

\vspace{3mm}

% \begin{definition}

%   We define the \textbf{connected isoperimetric constant} of $G$, denoted $\Phi^c_X(G)$ to be 
%   \[
%   \Phi^c_X(G)\defeq \inf\set{\frac{\mu(\bigcup_{c\in C} (B_n-F_n)c)}{\mu(\bigcup_{c\in C}F_nc)}},
%   \]
  
%   where $n\in \N$, $B_n$ and $F_n$ are as in \cref{connected folner}, and $C$ ranges over all positively measured sets such that $B_nc\cap B_nc'=\emptyset$ for all $c,c'\in C.$  {\color{blue} Later we use the definition when restricted to a subgraph, so it might make sense to include something about that here.  Actually, I think we'll need to change this definition a bit for the proof to make sense} 
  
%  In particular, $\Phi^c_X(G)=0$ if for any $\varepsilon>0$, there is a positively measured Borel set $C\subset X$ and $n\in \mathbb{N}$ such that $B_nc\cap B_nc'=\emptyset$ for all $c,c'\in C$ and $$\mu(\bigcup_{c\in C}(B_n-F_n)c)<\varepsilon \mu(\bigcup_c F_nc).$$
% \end{definition}

\begin{definition}\label{def:iso}
  Let $Y\subseteq X$ be Borel.  We define the \textbf{connected isoperimetric constant} of $G$ restricted to $Y$ to be 
  
   \[
   \Phi^c_Y(G)\defeq \inf_{\mathcal{F}}\set{\frac{\mu(\bigcup_{F\in \mathcal{F}} (B_F-F)\cap Y)}{\mu(\bigcup_{F\in \mathcal{F}}F\cap Y)}},
  \]

Where $\mathcal{F}\subseteq Y^{<\infty}$ is a Borel family with 
$\mu(\bigcup\mathcal{F})>0$ and
associated finite sets $F\subset B_F\subset X$ such that  for each $F\in\mathcal{F}$, 

\begin{enumerate}[(1)]
    \item\label{def:iso1} $\partial_{vis}(F)\subseteq B_F-F$, where $\partial_{vis}$ is with respect to $G$ and not $G|Y.$
    \item\label{def:iso2} $B_F-F$ is connected (as an induced subgraph of $G$).
    \item\label{def:iso3} $B_F\cap F'=\emptyset$ for every $F\neq F'\in \mathcal{F}.$
    \item\label{def:iso4} If $C\subseteq G\setminus \bigcup_{F\in \mathcal{F}}F $ is a finite induced component, then $C\subseteq \bigcup_{F\in \mathcal{F}}B_F.$
\end{enumerate}

\end{definition}

\begin{remark}
When $Y=X,$ $G$ is one-ended, and $\mathcal{F}$ is a family satisfying the above conditions, then the induced subgraph on $X\setminus \bigcup \mathcal{F}$ is a union of one-ended and finite components. Namely, any $G$ path with endpoints in $G\setminus \bigcup \mathcal{F}$ passing through some $F\in \mathcal{F}$ may be rerouted through $B_F.$ 
\end{remark}

The following Lemma will be needed for a measure exhaustion later. 

\begin{lemma}\label{exhaustion}
Let $\mathcal{F}$ witness that $\Phi_X^c(G)<\varepsilon$ and $Y=X\setminus\bigcup_{F\in\mathcal{F}}B_F.$  If $\mathcal{F}'$ witnesses that $\Phi_Y^c(G)<\epsilon',$ then $\mathcal{F}\cup \mathcal{F}'$ witnesses that $\Phi_X^c(G)<\varepsilon+\varepsilon'.$
\end{lemma}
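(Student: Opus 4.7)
The plan is to directly verify that $\mathcal{F}\cup\mathcal{F}'$, equipped with the inherited balls $B_F$ (for $F\in\mathcal{F}$) and $B_{F'}$ (for $F'\in\mathcal{F}'$), satisfies all four conditions of \cref{def:iso} as a witness for $\Phi_X^c(G)$, and then to bound the resulting ratio. The structural fact exploited throughout is that $\bigcup\mathcal{F}\subseteq \bigcup_{F\in\mathcal{F}} B_F = X\setminus Y$ while $\bigcup\mathcal{F}'\subseteq Y$, so the two families live in disjoint layers separated by the ``buffer'' $\bigcup_{F\in\mathcal{F}}(B_F-F)$.

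Conditions \cref{def:iso1,def:iso2} are immediate because each element of the union carries its associated ball from the original witness. For \cref{def:iso3}, within-family pairs hold by hypothesis, while for cross-pairs $F\in\mathcal{F}$, $F'\in\mathcal{F}'$ the layered structure gives $B_F\cap F'=\emptyset$ since $F'\subseteq Y$, and similarly $B_{F'}\cap F=\emptyset$ (using that witnesses to $\Phi_Y^c(G)$ naturally have their $B_{F'}$'s inside $Y$, so that they avoid $\bigcup\mathcal{F}\subseteq X\setminus Y$).

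The main step is \cref{def:iso4}. Let $C$ be a finite induced component of $G\setminus\bigcup(\mathcal{F}\cup\mathcal{F}')$. Its part in $X\setminus Y$ is automatically contained in $\bigcup_{F\in\mathcal{F}} B_F$, so it remains to locate $C\cap Y$. Let $D$ be any connected component of $G|Y\setminus\bigcup\mathcal{F}'$ that meets $C\cap Y$. Since $\bigcup\mathcal{F}\cap Y=\emptyset$, every $G|Y$-path is also a $G$-path avoiding $\bigcup\mathcal{F}$, so $D$ is connected inside $G\setminus\bigcup(\mathcal{F}\cup\mathcal{F}')$ and hence $D\subseteq C$. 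In particular $D$ is finite, so \cref{def:iso4} applied to the witness $\mathcal{F}'$ of $\Phi_Y^c(G)$ gives $D\subseteq \bigcup_{F'\in\mathcal{F}'} B_{F'}$. Combining the two parts yields $C\subseteq \bigcup_{F\in\mathcal{F}\cup\mathcal{F}'} B_F$.

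For the ratio, disjointness of $\bigcup\mathcal{F}$ and $\bigcup\mathcal{F}'$ gives a denominator of $\mu(\bigcup\mathcal{F})+\mu(\bigcup\mathcal{F}')$, and the numerator is at most $\mu(\bigcup_{\mathcal{F}}(B_F-F))+\mu(\bigcup_{\mathcal{F}'}(B_{F'}-F'))$; since $B_{F'}-F'\subseteq Y$, the second summand equals its intersection with $Y$, and the two hypotheses bound the numerator by $\varepsilon\,\mu(\bigcup\mathcal{F})+\varepsilon'\,\mu(\bigcup\mathcal{F}')$, whence the ratio is at most $\max(\varepsilon,\varepsilon')\leq \varepsilon+\varepsilon'$. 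The main obstacle we anticipate is the component-chasing in \cref{def:iso4}; the remaining conditions and the ratio bound are essentially bookkeeping once the layered picture of $\bigcup\mathcal{F}$, buffer, and $Y$ is in place.
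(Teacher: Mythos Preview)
Your argument rests on the claim that witnesses to $\Phi_Y^c(G)$ ``naturally have their $B_{F'}$'s inside $Y$.'' This is false: in \cref{def:iso} the family $\mathcal{F}'$ lies in $Y^{<\infty}$, but the associated sets $B_{F'}$ are explicitly taken in $X$, and conditions \labelcref{def:iso1,def:iso2} are stated for the ambient graph $G$, not $G|_Y$. (Indeed, in the proof of \cref{prop:isoperimetric} the $B_F$'s are translates of the F{\o}lner annuli $B_n - F_n$ and need not lie in $H$.) So nothing prevents $B_{F'}\cap F\neq\emptyset$ for some $F\in\mathcal{F}$, and this is exactly the obstruction to \labelcref{def:iso3} for the combined family. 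Your ratio argument also uses $B_{F'}-F'\subseteq Y$, which fails for the same reason.

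The paper's proof confronts this head-on: it replaces each offending $B_{F'}$ by $(B_{F'}\setminus\widehat{F})\cup B_F$, iterating over the finitely many $F\in\mathcal{F}$ that $B_{F'}$ meets. The point is that any $G$-path from $F'\subseteq Y$ into $F$ must cross $\partial_{vis}F\subseteq B_F$ first (this uses \labelcref{def:iso4} for $\mathcal{F}$), so one can reroute through $B_F$ and keep the modified set connected. Your layered picture is the right intuition for why \labelcref{def:iso3} holds for pairs $(B_F,F')$, but the other direction requires this surgery on the $B_{F'}$'s rather than an appeal to a containment that the definition does not provide.
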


\begin{proof}
The only potential problem is that $B_{F'}\cap F\neq \emptyset$ for some $F'\in\mathcal{F}'$ and $F\in \mathcal{F}.$  However, this can be overcome by replacing $B_{F'}$ with $(B_{F'}-\hat{F})\cup B_F$. To see that this is connected, note that property (4) of the definition and the fact that $F'\subset Y$ ensures that any $G$ path starting in $F'$ intersecting $F$ must first pass through $\partial_{vis}F\subseteq B_F$. In particular, any such path with endpoints in $\partial_{vis}F'$ can be rerouted through $B_F,$ witnessing that this set is connected. Since $B_{F'}$ is finite, it can only intersect finitely many sets $F\in \mathcal{F}$, and so iterating this for each such $F'$ gives the desired $B_{F'}.$
\end{proof}

\begin{remark}
 In the previous definition and lemma it is possible that some sets are nesting, i.e. there may be sets $F,F'\in \mathcal{F}$ so that any infinite walk starting from $\partial F$ must pass through $F'.$  In such a situation it must also be the case that $B_F$ is in a finite component $C$ of $G-F',$ and in particular $C-\hat{F}$ is connected, where $\hat{F}$ is the filling of $F$ as in \cref{def:filling}. Namely, $C$ and $\hat{F}$ satisfy property 3 of being a connected toast.
\end{remark}

%\begin{figure}
 %   \centering
    %\input{tikz/measure_exhaustion.tex}
    %\caption{adjusting $B_{F'}$ to avoid $F$}
    %\label{fig:measure_exhaustion}
%\end{figure}

Our next proposition is similar to the main Lemma from Marks' proof of the Connes-Feldman-Weiss theorem \cite{marks.cfw}, with the objects from Lemma \ref{connected folner} taking the place of arbitrary F\o lner sequences.

\begin{prop}\label{prop:isoperimetric}
For $G$ as above and any positively measured Borel $H\subseteq X$, $\Phi^c_{H}(G|_H)=0$.
\end{prop}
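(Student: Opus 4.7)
Plan:

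The plan is to adapt Marks's proof of the Connes--Feldman--Weiss theorem from \cite{marks.cfw}, using the connected F\o lner sets of \cref{connected folner} in place of arbitrary F\o lner sets so that the tiles automatically satisfy the connectedness condition (2) of \cref{def:iso}. Fix $\varepsilon>0$; the goal is to produce a Borel family $\mathcal{F}\subseteq H^{<\infty}$ witnessing $\Phi^c_H(G|_H)<\varepsilon$.

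I would first exploit the null-preserving hypothesis. Since the Radon--Nikodym cocycle $\w$ of $E_G$ is Borel and a.e.\ finite, a Luzin-type argument produces a Borel subset $H_0\subseteq H$ of positive measure and a constant $N$ with $\w_x(\gamma\cdot x)\le N$ for all $x\in H_0$ and $\gamma\in B_{3n}(e)\subseteq\Gamma$, where the scale $n$ is chosen next. Using \cref{connected folner}, pick $n$ large enough that $|B_n\setminus F_n|/|F_n|<\varepsilon/(CN^2)$ for a suitable constant $C$ depending on $|B_n|$. A Feldman--Moore enumeration then lets me greedily extract a maximal Borel set $D\subseteq H_0$ for which the translates $\{B_n\cdot x:x\in D\}$ are pairwise disjoint, and I set $\mathcal{F}\defeq\{F_n\cdot x:x\in D,\ B_n\cdot x\subseteq H\}$ with $B_{F_n\cdot x}\defeq B_n\cdot x$.

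Checking the four conditions of \cref{def:iso} is then routine: (1) $\partial_{vis}(F_n\cdot x)\subseteq(B_n\setminus F_n)\cdot x$ by the Cayley-graph property $\partial F_n\subseteq B_n\setminus F_n$ together with freeness; (2) $(B_n\setminus F_n)\cdot x$ is connected in $G|_H$ by \cref{connected folner}(3) combined with $B_n\cdot x\subseteq H$; (3) pairwise $B_F$-disjointness is built into the selection; (4) any finite component of $G|_H\setminus\bigcup\mathcal{F}$ must sit inside some $B_F$ by maximality of $D$. The ratio bound follows by transferring counts in $\Gamma$ to measures in $X$ via the cocycle identity \cref{cocycle}, giving numerator and denominator controlled up to a factor $N^2$ by the F\o lner ratio, hence less than $\varepsilon$.

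The main obstacle I foresee is the restriction $B_n\cdot x\subseteq H$, which can be severe when $\mu(H)$ is small, since typically most translates $B_n\cdot x$ will leak out of $H$. This is exactly where \cref{lemma:cocycle} is essential: applied to the bad set of centers whose $B_n$-ball meets $H^c$, it bounds the discarded mass by $|B_n|^2 N$ times the measure of the relevant intersection with $H^c$. To make the argument go through, one will likely need to combine this with \cref{exhaustion}, iterating on a nested sequence of cocycle-bounded pieces of $H$, so that enough tiles survive and the isoperimetric ratio stays strictly below $\varepsilon$.
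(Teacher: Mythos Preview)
Your plan takes a substantially more complicated route than the paper and, as you yourself flag, runs into an obstruction that the tools you propose do not resolve.

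The paper's proof avoids the Radon--Nikodym cocycle entirely. The key observation is that, by freeness of the action, for every $x\in X$ one has $|F_n^{-1}x|=|F_n|$ and $|(B_n\setminus F_n)^{-1}x|=|B_n\setminus F_n|$, so the F\o lner inequality $|B_n\setminus F_n|<\varepsilon|F_n|$ yields directly
\[
\int_H |(B_n\setminus F_n)^{-1}x|\,d\mu(x)\,<\,\varepsilon\int_H|F_n^{-1}x|\,d\mu(x).
\]
One then takes a finite Borel coloring $c$ of $X$ such that $c(z)=c(z')$ forces $B_nz\cap B_nz'=\emptyset$, splits both integrals as sums over colors, and by pigeonhole finds a single color $k$ for which the inequality persists. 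For that color the counting functions collapse to indicators, and the integrals become $\mu\bigl(\bigcup_{c(z)=k}(B_n\setminus F_n)z\cap H\bigr)$ and $\mu\bigl(\bigcup_{c(z)=k}F_nz\cap H\bigr)$, giving the ratio bound. No Luzin step, no greedy maximal selection, and no cocycle estimate.

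The constraint $B_n\cdot x\subseteq H$ you impose is what drives you toward \cref{lemma:cocycle}, but that lemma bounds the discarded mass by $k^2N\cdot\mu(H^c)$, which is useless since $\mu(H^c)$ need not be small. Iterating via \cref{exhaustion} does not help either: that lemma merges a family on $X$ with one on $X\setminus\bigcup B_F$, not families on nested cocycle-bounded pieces of a fixed $H$. The point you are missing is that in \cref{def:iso} the visible boundary and the connectedness of $B_F\setminus F$ are taken in $G$, not in $G|_H$, and the ratio already carries the intersection with $H$; so there is no need for the tiles or their $B_F$ to sit inside $H$, and the whole detour through the cocycle is unnecessary.
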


\begin{proof}
Let $(F_n)_{n\in\omega}$ be a F\o lner sequence as in Proposition \cref{connected folner}, and $n$ be large enough so that $|B_n-F_n|<\varepsilon |F_n|.$  Then we see that $$\int_H |(B_n-F_n)^{-1}x|d\mu(x)<\int_H \varepsilon|F_n^{-1}x|d\mu(x), $$ (where $|(B_n-F_n)^{-1}x|$ and $|F_n^{-1}x|$ are calculated with respect to $G$ and not to $H$).  Since the sets $B_n$ and $F_n$ are finite and bounded, we can find a finite Borel coloring $c$ of $G$ so that $x$ and $y$ get different colors if $B_nx\cap B_ny\neq \emptyset.$  Since every element of $G$ is colored, we see that 

$$\sum_k\int_H|\{z\in (B_n-F_n)^{-1}x: c(z)=k\}|d\mu(x)<\varepsilon \sum_k\int_H|\{z\in F_n^{-1}x: c(z)=k\}|d\mu(x).$$

There is some $k$ that still satisfies this inequality, i.e. $$\int_H|\{z\in (B_n-F_n)^{-1}x: c(z)=k\}|d\mu(x)<\varepsilon \int_H|\{z\in F_n^{-1}x: c(z)=k\}|d\mu(x).$$

Notice that $|\{z\in F_n^{-1}x: c(z)=k\}|$ is $1$ if $x\in F_nz$ for some $z$ with $c(z)=k$ and $0$ otherwise by our choice of coloring.  This means the right hand side of the above equation is just $\mu(\bigcup_{z: c(z)= k}F_nz\cap H).$  Similarly, $|\{z\in (B_n-F_n)^{-1}x: c(z)=k\}|$ is $1$ if $x\in (B_n-F_n)z$ for some $z$ with $c(z)=k$ and $0$ otherwise, so the left hand side of the equation is just $\mu(\bigcup_{z:c(z)=k}(B_n-F_n)z\cap H).$ \cref{Folner2,Folner3} of \cref{connected folner} ensure that the tiles $F_nz$ for $z$ with $c(z)=k$ are as in \cref{def:iso1,def:iso2} of \cref{def:iso}. 
\end{proof}

% \begin{lemma}\label{lemma:cocycle}
% Let $\w:E_G\to \mathbb{R}^+$, $(x,y)\mapsto \w_x(y)$ be the Radon-Nikodym cocycle of $E_G$ with respect to $\mu$.
% Let $A$ be a Borel set with $\mu(A)=\varepsilon,$ and $\mathcal{C}\subset X^{<\infty}$ a Borel family such that for some $n\in\N$ and finite $B\subset\Gamma$ for each $C\in \mathcal{C}$, $C\subseteq B\cdot c$ for some $c \in X$, and for each $x\in C$ and $\gamma\in BB^{-1}x$, $\w_{\gamma^{-1}x}(x)\leq n$.  Then $$\mu(\bigcup\{C: C\cap A\neq \emptyset\})\leq |B|^2n\ep.$$
% \end{lemma}
% \begin{proof}
% We'll use the fact that if $x,y\in C$, then $y\in BB^{-1}x$.
% \begin{align*}
%     \mu(\bigcup\{C\in\mathcal{C}: C\cap A\neq \emptyset\})&\le \int \sum_{y\in C:x\in C}\mathbb{1}_A(y)\; d\mu(x) \\
%     &\le \int \sum_{\gamma\in BB^{-1}}\frac{\mathbb{1}_{\bigcup\mathcal{C}\cap A}(\gamma x)}{\w_x(\gamma x)}\w_x(\gamma x)\; d\mu(x) \\
%     &= \sum_{\gamma\in BB^{-1}} \int  \mathbb{1}_{\bigcup\mathcal{C}\cap A}(x)\w_{\gamma^{-1}(x)}(x)\; d\mu(x) \\
%     &\le |B|^2\cdot n \cdot \mu(A) =|B|^2n\ep \qedhere
% \end{align*}
% \end{proof}

We now use \cref{prop:isoperimetric,lemma:cocycle} to prove \cref{toast a.e.}. This proof is similar to the equivalence of hyperfiniteness and isoperimetric constant on any subset being equal to zero due to Kaimanovich in \cite{Kaimanovich}. We use the connected isoperimetric constant to ensure that our toast is connected.

\begin{figure}\label{fig:C_k_to_T_k}
    \centering
    \input{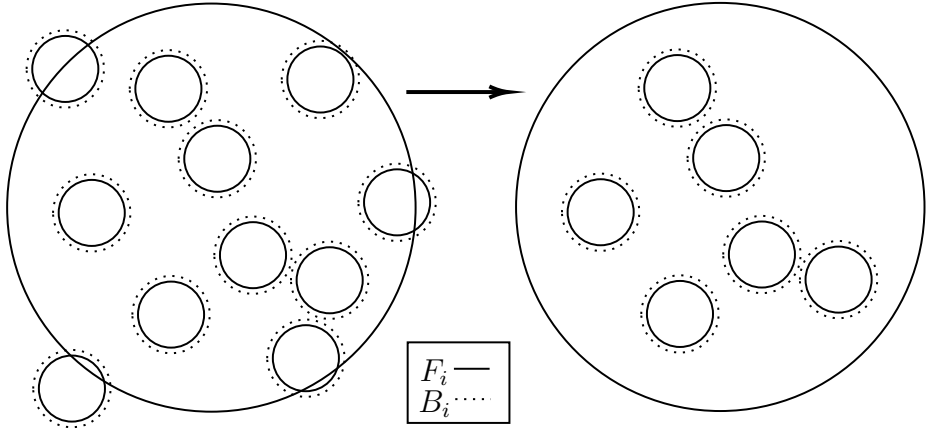}
    \caption{from $\mathcal{C}_k$ to $\mathcal{T}_k$: we only discard a set of measure $\frac{1}{2^{k+1}}$ by \cref{lemma:cocycle}}
    \label{fig:a.e.}
\end{figure}

\begin{proof}[Proof of \cref{toast a.e.}]
By a standard measure exhaustion argument and \cref{exhaustion}, for any $\ep>0$, we can find a maximal countable collection $\mathcal{A}_\ep=\set{A_n}$ of positively measured Borel subsets of $X$ such that:

\begin{enumerate}
    \item Each $A_n$ is of the form $\bigcup \mathcal{F}_n,$ where $\mathcal{F}_n$ is a set witnessing that $\Phi_X^c(G)<\ep$ as in Definition \ref{def:iso}. 
    \item For $A_n\neq A_m\in \mathcal{A}_\ep,$ if $F_n\in \mathcal{F}_n$ and $F_m\in\mathcal{F}_m,$ then $B_{F_n}\cap F_m=\emptyset.$
\end{enumerate}

Given such an $\mathcal{A}_\ep$ let $\mathcal{B}_\ep=\bigcup_n\bigcup_{F\in \mathcal{F}n}B_F,$ where $B_F$ is as in Definition \ref{def:iso}.  Notice that $\mu(\bigcup\mathcal{B})=1;$ if not, then we could apply Proposition \ref{prop:isoperimetric} to $X-\bigcup \mathcal{B}_\ep$ to find another disjoint Borel set $\mathcal{A}_\omega$ as above, contradicting the maximality of $\mathcal{A}_\ep.$  In particular, we have $\mu(\bigcup{\mathcal{A}_\ep})=1-\mu(\bigcup(\mathcal{B}_\ep-\mathcal{A}_\ep))> 1-\ep.$

We will use this fact to inductively build the connected toast as follows.  First, fix such an $\mathcal{A}_{\frac{1}{8}}$. We may refine $\mathcal{A}_{\frac{1}{8}}$ to a subset $\mathcal{C}_0$ so that properties (1)-(4) of Definition \ref{def:iso} still hold, $\mu(\mathcal{C}_0)>\frac{1}{4}$, and there is a finite $B_0\subseteq G$ ($|B_0|=k_0$) and $l_0\in \N$ such that for each $C\in\mathcal{C}_0$, $B_C\subseteq B_0\cdot x_C$ for some $x_C\in C$, and $\mathfrak{w}_y(x)\leq l_0$ for each $x\in 
B_C$ and $y\in B_0B_0^{-1}x$. 

Now assume we have constructed $\mathcal{C}_n\subseteq \mathcal{A}_{\ep'}$ so that properties (1)-(4) of Definition \ref{def:iso} still hold, $\mu(\mathcal{C}_n)>1-\frac{1}{2^{n+2}}$, and so that for $B\in\mathcal{B}_\ep'$ we have $B\subseteq B_n\cdot x'$, $|B_n|=k_n$, and $\max_{x,y\in B}\mathfrak{w}_y(x)<l_n$ for some $k_n,l_n\in\mathbb{N}$.

Let $\ep<\frac{1}{2^{n+2}\cdot k_n^2\cdot l_n}$. Then by the above argument, we can find an $\mathcal{A}_{\ep}$. Let $\mathcal{T}_n\defeq\mathcal{C}_n\setminus\bigcup\set{C\in\mathcal{C}_n:B_C\cap\mathcal{A}^c_\ep\neq\emptyset}$ (see \cref{fig:a.e.}). Since $\mu(\mathcal{A}_{\ep})<\ep$, by \cref{lemma:cocycle}, \[\mu(\mathcal{T}_k)>1-\frac{1}{2^{k+1}}.
\]
Then $\mathcal{T}\defeq\bigcup_{k\in\N}\mathcal{T}_k$ is a layered connected toast a.e.. Indeed, \labelcref{toast:exhaustive} follows from the fact that elements of $\mathcal{T}_n$ are induced components and $\mu(\bigcup\mathcal{T}_n)\rightarrow 1.$ \labelcref{toast:disjoint} follows from construction of the $\mathcal{A}_\ep$ and the fact that we discard sets in $\mathcal{C}_n$ whose connected boundaries interfere with $\mathcal{C}_{n+1}^c$. \labelcref{toast:connected} follows from the fact that $B_F-F$ is connected for each element of a $\mathcal{C}_n$ Finally, \labelcref{toast:layered} follows from the construction, since we throw away sets in $\mathcal{C}_k$ who lie in the complement of $\bigcup \mathcal{C}_{k+1}$.
\end{proof}

\subsection{Connected toasts and polynomial growth groups}\label{connectoastborel}
\hspace{0.5em}\\

We start with a proposition establishing a uniform constant for the boundary of connected sets with connected complement in one-ended groups.

\begin{prop}\label{unif connec}
Let $\Gamma$ be a one-ended, finitely presented group with finite generating set $S$ and consider the Cayley graph metric on $\Gamma$ induced by $S$. There is a constant $\kappa_\Gamma$ such that if $C \subset \Gamma$ is a connected subset of $\Gamma$ with $C^c$ connected, $\partial_{\kappa_\Gamma} C$ is connected.
\end{prop}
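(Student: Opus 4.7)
The plan is to exploit the finite presentation of $\Gamma$: fix a presentation $\langle S \mid R\rangle$ with $N \defeq \max_{r \in R} |r|$, so that the Cayley 2-complex $\tilde X$ is simply connected and every closed edge-loop in the Cayley graph bounds a \textbf{van Kampen 2-disk} built from relator 2-cells of Cayley-diameter at most $N$. I claim $\kappa_\Gamma$ may be taken to be a small absolute multiple of $N$.

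After a routine reduction (walking each $w \in \partial_{\kappa_\Gamma} C$ along a shortest path toward $C$ and stopping just before entering $C$ gives a $\partial_{\kappa_\Gamma} C$-path from $w$ into $\partial_1 C$), it suffices to connect two arbitrary vertices $u, v \in \partial_1 C$ inside $\partial_{\kappa_\Gamma} C$. Pick $C$-neighbours $u', v'$ of $u, v$; using that $C$ is connected pick a path $\beta \subseteq C$ from $u'$ to $v'$, and using that $C^c$ is connected pick a path $\alpha \subseteq C^c$ from $v$ to $u$. Their concatenation with the edges $(u,u')$ and $(v',v)$ is a closed loop $L$ in the Cayley graph; fill it with a reduced van Kampen 2-disk $D$.

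Label each vertex of $D$ as $C$ or $C^c$ according to its image in $\Gamma$. On $\partial D$, the $\beta$-arc is entirely $C$-labelled, the $\alpha$-arc entirely $C^c$-labelled, and the only colour changes occur across the two boundary edges $(u,u')$ and $(v',v)$. By planar separation applied to this $2$-colouring of a disk, the set of \textbf{mixed} 2-cells of $D$ (those carrying both colours on their boundary) contains a chain $T_0, T_1, \ldots, T_k$ that runs from the cell incident to $(u, u')$ to the cell incident to $(v, v')$, with each consecutive pair $T_i, T_{i+1}$ sharing a mixed edge. Let $q_i$ be the $C^c$-endpoint of the mixed edge between $T_{i-1}$ and $T_i$ (with $q_0 \defeq u$ and $q_{k+1} \defeq v$). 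Joining consecutive $q_i$'s by short arcs traversing only the $C^c$-vertices on $\partial T_i$ produces a walk from $u$ to $v$ in $C^c$; every vertex of this walk lies in a mixed 2-cell and is therefore within Cayley-distance $\le N$ of some $C$-vertex, so the walk sits inside $\partial_N C$.

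The main obstacle will be the last step. Inside a single mixed cell $T_i$, the $C^c$-vertices along $\partial T_i$ may split into several disjoint arcs, so $q_i$ and $q_{i+1}$ need not lie on a common $C^c$-arc of $\partial T_i$: a naive walk along the boundary could be forced to pass through $C$. Bridging this requires a local detour through nearby vertices of $C^c$ that still lie in $B_{cN}(C)$ for an absolute constant $c$, and this is precisely where one-endedness of $\Gamma$ is used: it rules out the ``local pinching'' that would force such a detour to travel far from $C$. The acknowledgement flagging that \cref{unif connec} fails for some amenable groups confirms that this step genuinely requires more than finite presentation alone.
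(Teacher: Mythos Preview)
Your global strategy---form a loop from a $C$-path and a $C^c$-path between two boundary points, fill it by a van Kampen diagram, and extract a $\partial_{\kappa_\Gamma}C$-path from the diagram---matches the paper's (which follows Gournay). The gap is in the extraction step, and you have in fact put your finger on it yourself.

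The paper does not hunt for a chain of mixed cells. It iteratively deletes from the diagram any $2$-cell that meets the current $p_1$-side boundary and has a vertex outside $B_{\kappa_\Gamma}(C)$. Taking $\kappa_\Gamma$ at least half the longest relator length, such a cell can have \emph{no} vertex in $C$; each deletion therefore exposes only vertices of $C^c$, and once no deletable cell remains the surviving boundary path lies in $B_{\kappa_\Gamma}(C)\setminus C=\partial_{\kappa_\Gamma}C$. The ``disconnected $C^c$-arcs'' issue simply never arises.

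Your mixed-cell chain, by contrast, only guarantees that each vertex along it lies in $B_N(C)\cap B_N(C^c)$, which does not exclude $C$-vertices; and as you note, the $C^c$-arcs on a single cell boundary need not be contiguous. Your proposed rescue via one-endedness is not an argument: one-endedness asserts that complements of finite sets have one infinite component, and gives no uniform bound on a $C^c$-detour around a local $C$-obstruction on some $\partial T_i$. The paper's peeling argument invokes one-endedness nowhere. The acknowledgement you cite records that the \emph{conclusion} fails for some amenable groups---i.e.\ that the stated hypotheses cannot be weakened---not that one-endedness is consumed at the step you indicate. Replace the mixed-cell chain with the peeling procedure and the proof closes with $\kappa_\Gamma$ equal to half the longest relator.
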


A proof of this result for balls can be found in \cite{Gournay_2014} for spheres, but as noted (in \cite{Gournay_2014}) the construction works for all sets satisfying the hypothesis. See also \cite{Gournay2}. We give a sketch of the proof for a slightly stronger result, \cref{fullcor}.

\begin{lemma}\label{fullcor}
Let $\Gamma$ be a one-ended, finitely presented group with finite generating set $S$ and consider the Cayley graph metric on $\Gamma$ induced by $S$. There is a constant $\kappa_\Gamma$ such that if $C \subset \Gamma$ is a full subset of $\Gamma$ with $B_{\kappa_\Gamma}(C)$ connected, then $\partial_{\kappa_\Gamma} C$ is connected.
\end{lemma}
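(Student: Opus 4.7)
The plan is to work in the Cayley $2$-complex of a finite presentation of $\Gamma$ and to surger any path in $B_{\kappa_\Gamma}(C)$ joining two points of $\partial_{\kappa_\Gamma} C$ into a path that avoids $C$ entirely, using the $2$-cells of the presentation together with one-endedness to ensure that the relevant detours land in the single infinite component of $C^c$. I would first fix a finite presentation $\langle S \mid R \rangle$ of $\Gamma$ and let $D$ denote the maximum length of a relator in $R$; the associated Cayley $2$-complex $\tilde X$ is then simply connected, with every $2$-cell of Cayley-diameter at most $D$. Using one-endedness together with finite generation, I would also extract a constant $M = M(\Gamma, S)$ controlling the diameter of finite bubbles in which a detour might get trapped when trying to reach the infinite component of the complement of a finite set. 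I would then set $\kappa_\Gamma = D + M$ (or a similar explicit combination).

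Given $x, y \in \partial_{\kappa_\Gamma} C$, I would choose a path $\beta$ from $x$ to $y$ inside $B_{\kappa_\Gamma}(C)$, which exists by hypothesis, and a path $\alpha$ from $x$ to $y$ inside $C^c$, which exists since $C$ is full. The loop $\beta \cdot \alpha^{-1}$ is null-homotopic in $\tilde X$ and therefore bounds a van Kampen diagram $\Delta$ whose $2$-cells each have diameter at most $D$. I would then iteratively surger every maximal subpath $\beta'$ of $\beta$ that lies inside $C$: travelling along the boundaries of the $2$-cells of $\Delta$ adjacent to $\beta'$ yields a detour in $C^c$ whose vertices stay within distance $D$ of $\beta'$, and hence inside $B_D(C) \subseteq B_{\kappa_\Gamma}(C)$. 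After finitely many such surgeries, the modified $\beta$ becomes a path from $x$ to $y$ entirely inside $\partial_{\kappa_\Gamma} C$, proving the desired connectedness.

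The main obstacle is the surgery step: one has to verify that when the subpath $\beta'$ is pushed off $C$ using the discs of $\Delta$, the resulting detour actually reaches the correct side and is not trapped in a finite complementary component of $C$ cut off by $\beta'$ itself. This is precisely where one-endedness is essential, via the constant $M$: in a multi-ended group the push could land in a finite bubble with no controlled escape, but in a one-ended group every detour can be routed into the unique infinite complementary component of any relevant finite subset within distance $M$, which together with the $D$-bound from the discs yields the advertised uniform constant $\kappa_\Gamma$. Some additional bookkeeping is required to ensure the surgery terminates (for instance by always surgering an innermost subpath first), but this is routine once the basic step is justified.
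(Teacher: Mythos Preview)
Your approach is the same van Kampen diagram plus cell-removal surgery that the paper sketches, just pushed in the opposite direction: the paper starts from the path $p_1 \subset C^c$ and iteratively removes $2$-cells having a vertex \emph{outside} $B_{\kappa_\Gamma}(C)$, whereas you start from $\beta \subset B_{\kappa_\Gamma}(C)$ and try to push its subpaths out of $C$. Either direction is viable in principle, but two aspects of your write-up are off.

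First, the claim that one surgery step ``yields a detour in $C^c$'' is unjustified: the $2$-cells of $\Delta$ adjacent to $\beta'$ may map entirely into $C$, so pushing across a single layer of cells need not exit $C$ at all. The correct formulation (in your direction) is to keep removing cells of $\Delta$ with a vertex in $C$ until none remain along the current boundary path; since each such cell has all its vertices in $B_D(C)$, the evolving path stays in $B_{\kappa_\Gamma}(C)$ throughout, and at termination it lies in $C^c$. Termination is automatic because $\Delta$ is finite, so no separate bookkeeping is needed.

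Second, the ``main obstacle'' you identify is not an obstacle, and the extra constant $M$ is superfluous. Fullness of $C$ means $C^c$ is connected, so there is no finite complementary component to be trapped in, and nothing is ``cut off by $\beta'$'' since $\beta' \subset C$. One-endedness is never invoked at the surgery step in the paper's argument, and the paper's constant is simply $\kappa_\Gamma$ just over half the length of the longest defining relator, with no additive term. The paper's choice of direction makes this transparent: a cell with a vertex outside $B_{\kappa_\Gamma}(C)$ cannot also have a vertex in $C$ by the diameter bound, so the surgered $p_1$ automatically remains in $C^c$ at every stage, and the entire finite-bubble discussion becomes moot.
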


\begin{remark}
The constant $\kappa_\Gamma$ in Lemma \ref{fullcor} can be taken to be the same as in Proposition \ref{unif connec}.
\end{remark}

\begin{figure}

\captionsetup[subfigure]{font=footnotesize}
\centering
\subcaptionbox{Possibility for the van Kampen diagram to not be homeomorphic to a disc.}[.5\textwidth]{%
\tikzset{every picture/.style={line width=0.75pt}} %set default line width to 0.75pt        

\begin{tikzpicture}[x=0.75pt,y=0.75pt,yscale=-1,xscale=1]
%uncomment if require: \path (0,254); %set diagram left start at 0, and has height of 254

%Straight Lines [id:da46846841833305297] 
\draw    (270.8,198.79) -- (338,155.53) -- (338,107.33) -- (298,80.14) -- (272.4,50.47) -- (309.2,33.17) -- (390.8,55.41) -- (389.2,80.14) -- (338,107.33) ;
%Straight Lines [id:da2656685111530752] 
\draw    (338,155.53) -- (379.6,196.32) ;
%Straight Lines [id:da29303937715686] 
\draw    (298,80.14) -- (350,44.29) ;
%Straight Lines [id:da41488843421360966] 
\draw    (324,62.21) -- (389.2,80.14) ;
%Straight Lines [id:da8120721778911812] 
\draw    (304.4,177.16) -- (368.4,185.2) ;
%Straight Lines [id:da8524858860407696] 
\draw  [dash pattern={on 0.84pt off 2.51pt}]  (379.6,196.32) -- (402,217.33) ;
%Straight Lines [id:da981745468285498] 
\draw  [dash pattern={on 0.84pt off 2.51pt}]  (270.8,198.79) -- (250,216.1) ;
%Shape: Polygon [id:ds3116329231674264] 
\draw   (263,86) -- (298,80.14) -- (302.6,130.66) -- (277.6,143.66) -- (259,118) -- cycle ;
%Straight Lines [id:da12646599253076118] 
\draw    (259,118) -- (302.6,130.66) ;
%Straight Lines [id:da750777685685476] 
\draw    (263,86) -- (280.8,124.33) ;

\end{tikzpicture}}%
\subcaptionbox{Removing the striked region will cause a disconnect in the van Kampen diagram.}[.5\textwidth]{% Pattern Info
 
\tikzset{
pattern size/.store in=\mcSize, 
pattern size = 5pt,
pattern thickness/.store in=\mcThickness, 
pattern thickness = 0.3pt,
pattern radius/.store in=\mcRadius, 
pattern radius = 1pt}
\makeatletter
\pgfutil@ifundefined{pgf@pattern@name@_5z1svear8}{
\pgfdeclarepatternformonly[\mcThickness,\mcSize]{_5z1svear8}
{\pgfqpoint{0pt}{0pt}}
{\pgfpoint{\mcSize+\mcThickness}{\mcSize+\mcThickness}}
{\pgfpoint{\mcSize}{\mcSize}}
{
\pgfsetcolor{\tikz@pattern@color}
\pgfsetlinewidth{\mcThickness}
\pgfpathmoveto{\pgfqpoint{0pt}{0pt}}
\pgfpathlineto{\pgfpoint{\mcSize+\mcThickness}{\mcSize+\mcThickness}}
\pgfusepath{stroke}
}}
\makeatother
\tikzset{every picture/.style={line width=0.75pt}} %set default line width to 0.75pt        

\begin{tikzpicture}[x=0.75pt,y=0.75pt,yscale=-1,xscale=1]
%uncomment if require: \path (0,302); %set diagram left start at 0, and has height of 302

%Straight Lines [id:da26806276361906556] 
\draw    (248.27,231.6) -- (257.04,224.38) ;
%Straight Lines [id:da8221086950317191] 
\draw    (257.04,224.38) -- (265.8,217.15) ;
%Straight Lines [id:da6889809549524044] 
\draw    (265.8,217.15) -- (274.56,209.93) ;
%Straight Lines [id:da31231083623255573] 
\draw    (274.56,209.93) -- (283.33,202.71) ;
%Straight Lines [id:da3501630603398276] 
\draw    (283.33,202.71) -- (292.09,195.49) ;
%Straight Lines [id:da5953280395782712] 
\draw    (292.09,195.49) -- (300.85,188.27) ;
%Straight Lines [id:da08772061379495155] 
\draw    (300.85,188.27) -- (275.82,162.27) -- (300.85,139.99) -- (268.31,91.71) -- (288.34,70.66) -- (335.91,53.33) -- (360.95,97.9) -- (343.42,142.47) -- (300.85,139.99) ;
%Straight Lines [id:da004038400532171416] 
\draw    (300.85,188.27) -- (333.4,229.12) ;
%Straight Lines [id:da16079953235442757] 
\draw    (268.31,91.71) -- (348.43,75.62) ;
%Straight Lines [id:da7853165721318527] 
\draw    (294.6,130.09) -- (348.43,75.62) ;
%Straight Lines [id:da29713316829329095] 
\draw    (257.04,224.38) -- (324.64,217.98) ;
%Straight Lines [id:da5361148175524739] 
\draw    (378.47,178.13) -- (396,199.17) ;
%Straight Lines [id:da855857147935579] 
\draw  [dash pattern={on 0.84pt off 2.51pt}]  (248.27,231.6) -- (232,248.93) ;
%Shape: Polygon [id:ds37150121351665955] 
\draw  [pattern=_5z1svear8,pattern size=6pt,pattern thickness=0.75pt,pattern radius=0pt, pattern color={rgb, 255:red, 0; green, 0; blue, 0}] (300.85,139.99) -- (343.42,142.47) -- (378.47,177.13) -- (300.85,188.27) -- (275.82,162.27) -- cycle ;
%Straight Lines [id:da7143943546708322] 
\draw    (378.47,177.13) -- (324.64,217.98) ;
%Straight Lines [id:da1059877811245491] 
\draw  [dash pattern={on 0.84pt off 2.51pt}]  (333.4,229.12) -- (350.93,250.17) ;
%Straight Lines [id:da4590607696552014] 
\draw  [dash pattern={on 0.84pt off 2.51pt}]  (396,199.17) -- (413.53,220.22) ;

\end{tikzpicture}}
\caption{Two possible case pictured for proof generality in the proof of Lemma \ref{fullcor}.}
\label{fig:vKD}
\end{figure}
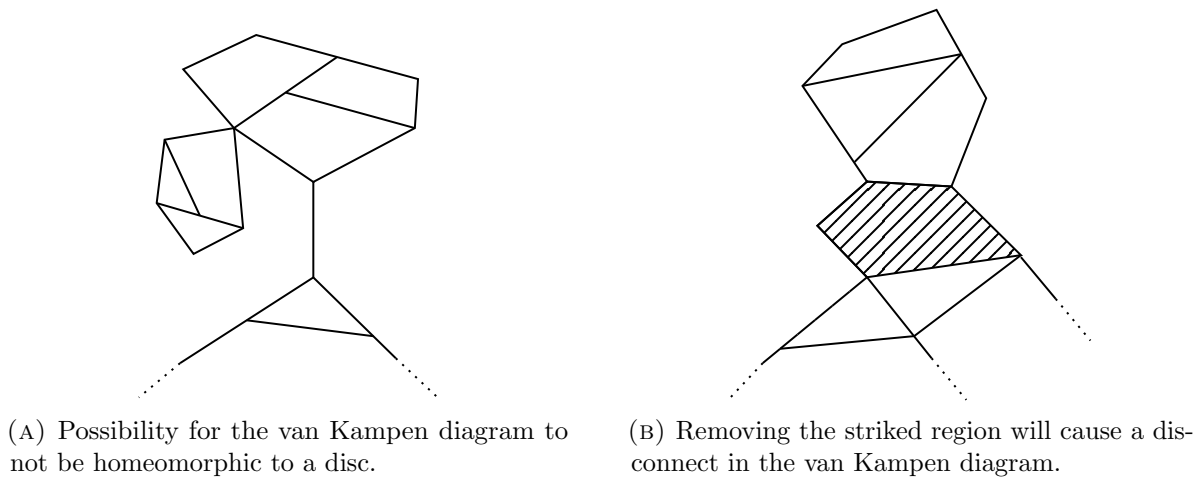

\begin{proof}[Sketch of proof]
The proof is almost the same as of Proposition \ref{unif connec} in \cite{Gournay_2014}. Take two points in $\partial_{\kappa_\Gamma}(V)$. Take one path avoiding $V$, say $p_1$ and one path within $B_{\kappa_\Gamma}(V)$, say $p_2$. This determines a loop, namely $p_1p_2^{-1}$, in the Cayley complex of $\Gamma$, which is simply connected. Thus, one can fill this loop by the image of a disc which can be decomposed into $2$-cells corresponding to generating relations, of which there are only finitely many of. This decomposition is commonly referred to as a van Kampen diagram, which we denote $D$. It will not always be homeomorphic to a disc, since there might be some degeneracies, in the form of flattening part of the disc to a line or point. This is pictured on the left in Figure \ref{fig:vKD}.

\begin{figure}[h]
    \centering

    \tikzset{every picture/.style={line width=0.75pt}} %set default line width to 0.75pt        

\begin{tikzpicture}[x=0.75pt,y=0.75pt,yscale=-1,xscale=0.87]
%uncomment if require: \path (0,313); %set diagram left start at 0, and has height of 313

%Shape: Ellipse [id:dp37824266693777275] 
\draw  [fill={rgb, 255:red, 0; green, 0; blue, 0 }  ,fill opacity=1 ] (62.47,129.66) .. controls (62.47,128.38) and (61.28,127.34) .. (59.81,127.34) .. controls (58.34,127.34) and (57.15,128.38) .. (57.15,129.66) .. controls (57.15,130.95) and (58.34,131.99) .. (59.81,131.99) .. controls (61.28,131.99) and (62.47,130.95) .. (62.47,129.66) -- cycle ;
%Shape: Ellipse [id:dp06347590716150875] 
\draw  [fill={rgb, 255:red, 0; green, 0; blue, 0 }  ,fill opacity=1 ] (586.02,130.83) .. controls (586.02,129.54) and (584.83,128.5) .. (583.37,128.5) .. controls (581.9,128.5) and (580.71,129.54) .. (580.71,130.83) .. controls (580.71,132.11) and (581.9,133.15) .. (583.37,133.15) .. controls (584.83,133.15) and (586.02,132.11) .. (586.02,130.83) -- cycle ;
%Rounded Rect [id:dp43109883784129166] 
\draw  [color={rgb, 255:red, 23; green, 11; blue, 11 }  ,draw opacity=1 ][dash pattern={on 4.5pt off 4.5pt}] (48,183.6) .. controls (48,172.78) and (56.78,164) .. (67.6,164) -- (562.4,164) .. controls (573.22,164) and (582,172.78) .. (582,183.6) -- (582,242.4) .. controls (582,253.22) and (573.22,262) .. (562.4,262) -- (67.6,262) .. controls (56.78,262) and (48,253.22) .. (48,242.4) -- cycle ;
%Rounded Rect [id:dp33927412026573256] 
\draw  [color={rgb, 255:red, 0; green, 0; blue, 0 }  ,draw opacity=1 ][dash pattern={on 4.5pt off 4.5pt}] (40,133.2) .. controls (40,109.89) and (58.89,91) .. (82.2,91) -- (558.8,91) .. controls (582.11,91) and (601,109.89) .. (601,133.2) -- (601,259.8) .. controls (601,283.11) and (582.11,302) .. (558.8,302) -- (82.2,302) .. controls (58.89,302) and (40,283.11) .. (40,259.8) -- cycle ;
%Straight Lines [id:da13355931137836952] 
\draw    (150.17,142.05) -- (126.25,193.15) ;
%Straight Lines [id:da8486051597134665] 
\draw    (174.09,53.8) -- (123.59,128.12) ;
%Straight Lines [id:da32999172571742164] 
\draw    (146.18,152.5) -- (224.58,216.37) ;
%Straight Lines [id:da5899205450538003] 
\draw    (148.84,90.96) -- (217.94,135.08) ;
%Straight Lines [id:da7599283139848136] 
\draw    (217.94,135.08) -- (259.13,175.73) ;
%Straight Lines [id:da3750705422573746] 
\draw    (259.13,175.73) -- (224.58,216.37) ;
%Straight Lines [id:da7136720318167264] 
\draw    (259.13,39.86) -- (195.35,121.15) ;
%Straight Lines [id:da21166241591036106] 
\draw    (235.21,70.06) -- (312.29,117.67) ;
%Straight Lines [id:da4610414364555968] 
\draw    (313.62,37.54) -- (312.29,117.67) ;
%Straight Lines [id:da17091929559397867] 
\draw    (275.41,136.54) -- (328.23,179.21) ;
%Straight Lines [id:da21258099961017507] 
\draw    (259.13,175.73) -- (340.19,223.34) ;
%Straight Lines [id:da09105399878300013] 
\draw    (328.23,179.21) -- (309.63,204.76) ;
%Straight Lines [id:da6431837875339015] 
\draw    (421.25,44.51) -- (388.03,117.67) ;
%Straight Lines [id:da9328332451816879] 
\draw    (388.03,117.67) -- (495.66,200.11) ;
%Straight Lines [id:da24832470475790935] 
\draw    (328.23,179.21) -- (534.2,78.18) ;
%Straight Lines [id:da3842372956359654] 
\draw    (312,110) -- (407.96,73.54) ;
%Straight Lines [id:da6830529556792626] 
\draw    (335,102) -- (388.03,117.67) ;
%Straight Lines [id:da16840534978064814] 
\draw    (407.96,73.54) -- (493.01,97.92) ;
%Straight Lines [id:da9447388320905332] 
\draw    (493.01,97.92) -- (575,156) ;
%Straight Lines [id:da15049948253213796] 
\draw    (441.85,158.89) -- (368.1,222.18) ;
%Straight Lines [id:da8141206048122991] 
\draw    (441.85,158.89) -- (546.16,136.25) ;
%Straight Lines [id:da00043930681759674073] 
\draw    (534.2,78.18) -- (546.16,136.25) ;
%Shape: Polygon [id:ds9465172891110323] 
\draw   (128,70) -- (206,42) -- (386,34) -- (550,83) -- (585,132) -- (565,175) -- (467,210) -- (355,224) -- (262,221) -- (146,208) -- (70,150) -- (59,129) -- (74,104) -- cycle ;
%Straight Lines [id:da3238360320364868] 
\draw [line width=3]    (585,132) -- (583.33,135.83) ;
%Straight Lines [id:da6715575532422318] 
\draw [line width=3]    (583.33,135.83) -- (581.67,139.67) ;
%Straight Lines [id:da6030354962807483] 
\draw [line width=3]    (581.67,139.67) -- (580,143.5) ;
%Straight Lines [id:da17127693925326204] 
\draw [line width=3]    (580,143.5) -- (578.33,147.33) ;
%Straight Lines [id:da4161577053333869] 
\draw [line width=3]    (578.33,147.33) -- (576.67,151.17) ;
%Straight Lines [id:da8948512842277443] 
\draw [line width=3]    (576.67,151.17) -- (575,155) ;
%Straight Lines [id:da9112981423790276] 
\draw [line width=3]    (575,155) -- (492,97) -- (413,137) -- (388,118) -- (334,103) -- (312,111) -- (313,118) -- (278,139) -- (217,135) -- (149,141) -- (124,128) -- (72,105) -- (58,131) ;

% Text Node
\draw (556.2,235.5) node [anchor=north west][inner sep=0.75pt]   [align=left] {$\displaystyle \textcolor[rgb]{0.05,0.05,0.05}{V}$};
% Text Node
\draw (523.86,274.1) node [anchor=north west][inner sep=0.75pt]  [color={rgb, 255:red, 0; green, 0; blue, 0 }  ,opacity=1 ] [align=left] {$\displaystyle B_{\kappa _{\Gamma }}( V){}$};
% Text Node
\draw (315.57,13.64) node [anchor=north west][inner sep=0.75pt]   [align=left] {$\displaystyle p_{1}$};
% Text Node
\draw (309.93,230.59) node [anchor=north west][inner sep=0.75pt]   [align=left] {$\displaystyle p_{2}$};

\end{tikzpicture}
    
    \caption{Example of a van Kampen diagram, without the possible generalities of Figure \ref{fig:vKD}. The bold line is the constructed path, after removing every cell intersecting $B_{\kappa_\Gamma}(V)^c$, as per the procedure described in Lemma \ref{fullcor}.}
    
    \label{fig:vK}
\end{figure}
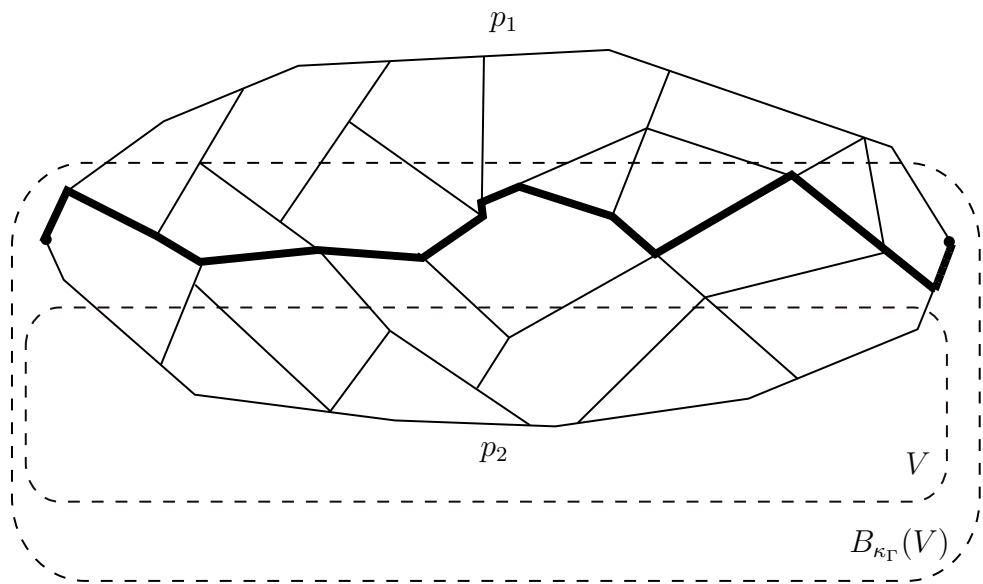

Such $2$-cells can have vertices on the outside of $B_{\kappa_\Gamma}(V)$. Pick such a $2$-cell which intersect the boundary in $p_1$ and remove the cell and its intersection with the boundary of $D$. If there does not exists such a $2$-cell, $p_1$ is already a path in $\partial_{\kappa_\Gamma}(V)$. 

Upon removal, the boundary changes and $p_1$ changes to be a new path $p'$. More precisely, the new boundary of $D$ is of the form $p'p_2^{-1}$ for some path $p'$. It is possible that removing a $2$-cell disconnects the disc, as seen on the right in Figure \ref{fig:vKD}. In which case, $p_2$ is still part of the boundary of exactly one connected component, which is the one on which the argument should be continued.

Note that by taking $\kappa_\Gamma$ to be bigger than half of the longest generating relations, we ensure that a given $2$-cell, which corresponds to a generating relation, cannot have both vertices outside of $B_{\kappa_\Gamma}(V)$ and vertices within $V$. In particular, the new path $p'$ cannot intersect $V$.

The remaining path after removing all $2$-cells with vertices in $B_{\kappa_\Gamma}(V)^c$ can never fall within $V$ either, since for this to happen, a $2$-cell with vertices in $V$ would have had to be removed. See Figure \ref{fig:vK}.

We thus end with a path between the two points which lies in $\partial_{\kappa_\Gamma}(V)$. Since the points were arbitrary, $\partial_{\kappa_\Gamma}(V)$ must be connected, as required. 
\end{proof}

\begin{theorem}\label{toast borel}
Let $\Gamma$ be a one-ended polynomial growth group with finite generating set $S,$ and let $G = G(S,a)$ be the Schreier graph of a free Borel action $\Gamma\acts^a X$.  Then $G$ admits a connected toast (everywhere).
\end{theorem}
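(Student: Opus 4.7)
The plan is to upgrade Bernshteyn's construction from Theorem 4.8(a) of \cite{basymd}---noted in the acknowledgments to produce toasts in actions of polynomial growth groups---into a \emph{connected} toast. The two key ingredients are Gromov's theorem, which implies that $\Gamma$ is virtually nilpotent and hence finitely presented, so that \cref{unif connec} and \cref{fullcor} provide a uniform constant $\kappa_\Gamma$ controlling the connectedness of thickened boundaries of full sets; and the fact that polynomial growth groups admit particularly well-behaved Borel decompositions into bounded pieces at arbitrary scales.

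First I would extract from Bernshteyn's construction a Borel sequence of partitions $(\mathcal{P}_n)_{n \in \N}$ of $X$ into finite $G$-connected pieces whose diameters lie in windows $[r_n, R_n]$, chosen so that $r_{n+1} > R_n + 2\kappa_\Gamma$ and so that each cell of $\mathcal{P}_{n+1}$ is a union of cells of $\mathcal{P}_n$ (together with some interstitial vertices). Then replace each $K \in \mathcal{P}_n$ by its filling $\widehat{K}$, in the sense of \cref{def:filling}; this is well-defined because $\Gamma$ is one-ended and $K$ is finite. Set $\mathcal{T} \defeq \bigcup_n \{\widehat{K} : K \in \mathcal{P}_n\}$.

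Finally, I would verify the three toast conditions for $\mathcal{T}$. Condition \labelcref{toast:exhaustive} follows from $r_n \to \infty$, since every edge of $G$ eventually lies inside some tile. Condition \labelcref{toast:disjoint} follows from fullness combined with the scale gap: two intersecting tiles at different levels must nest, and the $2\kappa_\Gamma$-buffer accommodates the required $B_1$-thickening. For \labelcref{toast:connected}, fix $\widehat{K}$ at level $n+1$ and its contained level-$n$ tiles $\widehat{L}_1, \ldots, \widehat{L}_m$; by \cref{unif connec} each $\partial_{\kappa_\Gamma}\widehat{L}_i$ is connected, and the scale gap forces these boundaries to be pairwise disjoint and contained in $\widehat{K} \setminus \bigcup_i \widehat{L}_i$, so any $\widehat{K}$-path between two points of that set can be rerouted around each incursion into some $\widehat{L}_i$ via its connected boundary.

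The main obstacle is arranging Bernshteyn's hierarchy so that it simultaneously achieves the required scale gaps and the strict union-refinement property in a Borel manner; once that is secured, fullness, Borelness of the filling operation (a finite local computation at each vertex), and the uniform constant $\kappa_\Gamma$ from \cref{unif connec} do the remaining combinatorial work.
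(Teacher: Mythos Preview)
Your overall strategy coincides with the paper's: invoke finite presentation (via Gromov) to get the constant $\kappa_\Gamma$, pull a pre-toast structure out of \cite{basymd} Theorem~4.8(a), fill the pieces, and verify \labelcref{toast:exhaustive}--\labelcref{toast:connected}. Where you diverge is in what you claim Bernshteyn's construction actually outputs. It does \emph{not} yield a nested sequence of Borel partitions of $X$ into connected pieces with a refinement property; what the paper extracts is a sequence of Borel \emph{subsets} $V_n\subseteq X$ such that (P1) the $3r$-components of each $V_n$ are uniformly bounded, (P2) every $x$ has $B_r(x)\subseteq V_n$ for \emph{some} $n$, and (P3) for $n<m$ and $x\in V_n$ one has $B_{4r}(x)\subseteq V_m$ or $B_{4r}(x)\cap V_m=\emptyset$. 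These $V_n$ are not partitions, need not be increasing, and have no refinement hierarchy. Your ``main obstacle'' (arranging the nested partition structure) is therefore not something the paper achieves or needs; it simply works with (P1)--(P3) directly, taking the tiles to be the connected components of the fillings $W_n\defeq\widehat{V_n}$.

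This weaker structure forces two adjustments to your verification. For \labelcref{toast:disjoint}, one cannot appeal to a scale gap between consecutive levels; instead the paper shows that if the $r$-boundaries of two tiles meet, then by \cref{fullremark} there are nearby points of the underlying $V_{n_1},V_{n_2}$, and (P3) forces one tile to swallow the other with an $r$-buffer. For \labelcref{toast:connected} the issue is more serious: the tiles $L\subsetneq K$ need not all sit at a single lower level, and several of them can be distinct connected components of the \emph{same} $W_n$, hence $\kappa_\Gamma$-close to one another. You then cannot reroute around each $L$ separately via \cref{unif connec}, because the $\kappa_\Gamma$-boundaries may overlap. The paper instead groups all such same-level components into one full set whose $\kappa_\Gamma$-ball is connected and applies \cref{fullcor} (the version that only requires $B_{\kappa_\Gamma}(C)$ connected, not $C$ itself). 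So your sketch has the right ingredients, but the combinatorial verification you outline relies on a layered partition picture that is not available; fixing it amounts to reproducing the paper's argument with (P1)--(P3) and \cref{fullcor}.
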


\begin{proof}
Since $\Gamma$ is of polynomial growth, it must be finitely presented. Let $\kappa_\Gamma$ be the constant given by Proposition \ref{unif connec}, which is also sufficient for Lemma \ref{fullcor}. 

By \cite{basymd}, this action has finite Borel asymptotic dimension, hence by the proof of Theorem 4.8 a), for $r >> \kappa_\Gamma$, there exists a sequence of sets $V_n \subset X$ such that:
\begin{enumerate}[(P1)]
    \item\label{pretoast:finite} The 3r-components of $V_n$ are uniformly bounded by some constant $c_n$, in the sense that they are of diameter $< c_n$. This also yields a uniform bound on their cardinality.
    \item\label{pretoast:exhaust} For every $x \in X$, there is $n$ such that $B_r(x) \subset V_n$.
    \item\label{pretoast:disjoint} If $n < m$ and $x \in V_n$, either $B_{4r}(x) \subset V_m$ or $B_{4r}(x) \cap V_m = \emptyset$.
\end{enumerate}
Uniform boundedness in \labelcref{pretoast:finite} is not direct. It follows from the fact that these sets are constructed using \cite{basymd} Corollary 4.6, which gives a finite cover $\mathcal{U}^n = \left\{U_0^n, ..., U_d^n \right\}$ where every $U^n_i$ has uniformly bounded $3r$-components. The sequence $V_n$ is built locally the same as a member of the cover, in the sense that for any $x$, there is some $i$ such that
\begin{align*}
    V_n|_{[x]} = U_i^n|_{[x]}.
\end{align*}
Thus, the $3r$-components of $V_n$ are uniformly bounded by some $c_n$.

 As noted in \cite{basymd}, these are similar to toasts, but lack some properties. We modify them to be toasts by considering the sequence $W_n := \widehat{V_n}$.
 
 To see why this is well-defined, we just need to justify why $V_n$ can be filled. Consider that the $3r$-components are far apart from each other, with respect to $\kappa_\Gamma$, filling $V_n$ will be equivalent to filling every $3r$-component separately.
 The only problem that might happen is if two $3r$-components $C_1, C_2$ are nested in the sense that $C_1$ is strictly contained in a finite connected component of $C_2^c$. Since every component is isomorphic to the Cayley graph and the $3r$-components have bounded diameter, there can be no infinitely nested sequence of $3r$-components and so we can restrain ourselves to looking at $3r$-components which are nested in no other $3r$-components.

We now prove \labelcref{toast:exhaustive} to \labelcref{toast:connected} for the collection $\mathcal{T} \subset X^{<\omega}$ of connected components (with respect to the Schreier graph) of the sequence $W_n$, which are finite because the $3r$-components are bounded. More precisely, since $3r$-components of $W_n$ are fillings of $3r$-components of $V_n$ and these are bounded in diameter by some constant $c_n$, the corresponding $3r$-components of $W_n$ will still be bounded in diameter by $c_n$. By \labelcref{pretoast:exhaust}, any edge of $G$ is eventually in $E(V_n)$, hence in $E(W_n)$ and $\bigcup_{L\in \mathcal{T}}E(L)$. \begin{flushright} \vspace{-1.5em} $\square$ \labelcref{toast:exhaustive} \end{flushright}

To prove \labelcref{toast:disjoint}, suppose we have $L_1\neq L_2 \in \mathcal{T}$ with $L_i$ a connected component of $W_{n_i}$. If $n_1 = n_2$, 
then $L_1$ and $L_2$ are different connected components of the same set, hence $B_1(L_1) \cap L_2 = \emptyset$. When $n_1 < n_2$, it suffices to show one of

\begin{enumerate}
    \item $B_r(L_1) \cap L_2 = \emptyset$
    \item $B_r(L_2) \subset L_1$
    \item $B_r(L_1) \subset L_2$.
\end{enumerate}

Note that we are considering the ball of radius $r$, not just one. Considering that both $\partial_r(L_i)$ are finite and connected, if they are disjoint, either $\partial_r(L_1) \subset L_2$, which implies (3), or $\partial_r(L_1) \subset B_r(L_2)^c$.  By symmetry, let us consider the case where $\partial_r(L_1) \subset B_r(L_2)^c$ and $\partial_r(L_2) \subset B_r(L_1)^c$. In this case, (1) is true.

Consider now the case where the $\partial_r(L_i)$ are not disjoint. Let $y \in \partial_r(L_1) \cap \partial_r(L_2)$ and by Remark \ref{fullremark}, we can find two points $x_i \in L_i \cap V_{n_i}$ which are $r$-close to $y$. In this case, by \labelcref{pretoast:disjoint}, $B_{4r}(x_1) \subset V_{n_2}$ and by connectedness, $B_{4r}(x_1) \subset L_2$. In particular, $y \in L_2$, contradicting the hypothesis that $y \in \partial_r(L_2)$.  \begin{flushright}\vspace{-1.5em} $\square$ \labelcref{toast:disjoint} \end{flushright} 

To prove \labelcref{toast:connected}, note that we've shown something stronger than \labelcref{toast:disjoint}. We've shown that if $L_1, L_2 \in \mathcal{T}$ and $L_1 \subset L_2$, we have that $\partial_{\kappa_\Gamma}(L_1) \subset L_2$. Furthermore, $\partial_{\kappa_\Gamma}(L_1)$ is either disjoint of any other $L_3 \in \mathcal{T}$ with $L_3 \subset L_2$, or such $L_3$ is a connected component of the same $W_n$ that $L_1$ is.

Taking all such $L_3$ repeatedly, we get a subset of $W_n$ whose $\kappa_\Gamma$-ball is connected and contained in $L_2$ without touching any other components. The $\kappa_\Gamma$-boundary is connected by the Lemma \ref{fullcor}. \begin{flushright} \vspace{-1.5em} $\square$ \labelcref{toast:connected} \end{flushright}

Hence, the connected components of the sequence $W_{n}$ is a connected toast, as desired.
\end{proof}

\section{Combinatorial applications}

In this section we use the connected toasts constructed in the previous sections to help produce measurable and Baire measurable matchings in regular bipartite graphs and Borel balanced orientations in even degree graphs.

\subsection{Borel orientations}
\hspace{0.5em}\\

 In this section we prove Theorem \ref{borel orientation}.  The same result was already proven for pmp graphs in \cite{bowen.kun.sabok} as an application of the results on measurable perfect matchings in that paper.  Unlike in the measure case, in the Borel setting it is no longer known if the existence of connected toasts ensures the existence of Borel matchings even in $d$-regular bipartite graphs.  Instead of using the seemingly more complicated matching results, we give a more direct proof that even degree graphs with connected toasts admit Borel balanced orientations. 

We use the following simple fact, whose proof can be found in \cite{bowen.kun.sabok} Lemma 2.4.

\begin{lemma}\label{subgraph}
Let $G$ be a finite, connected graph and $P \subseteq V(G)$ a subset of even size.
Then there exists a spanning subgraph $H$ of $G$ such that every vertex of $P$ has odd degree in $H$, and every vertex of  $V(G) \setminus P$ has even degree in $H$.
\end{lemma}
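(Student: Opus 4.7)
The plan is to carry out the standard $T$-join construction. Since $G$ is finite and connected, I would first fix a spanning tree $T$ of $G$. Because $|P|$ is even, I can pair up the vertices of $P$ arbitrarily into $k = |P|/2$ disjoint pairs $\{u_1,v_1\}, \dots, \{u_k,v_k\}$. For each such pair $(u_i,v_i)$, let $\pi_i$ denote the unique path in $T$ connecting $u_i$ to $v_i$, viewed as a set of edges.

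I would then define $H$ to be the spanning subgraph of $G$ whose edge set is the symmetric difference
\[
E(H) \defeq \pi_1 \triangle \pi_2 \triangle \cdots \triangle \pi_k,
\]
i.e. the set of edges of $T$ used by an odd number of the paths $\pi_i$. This is a subgraph of $T$, hence of $G$, so it is a legitimate spanning subgraph.

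The verification reduces to a parity computation at each vertex $v \in V(G)$. Since $e \in E(H)$ exactly when $e$ lies in an odd number of the $\pi_i$, one gets
\[
\deg_H(v) \equiv \sum_{i=1}^{k} \deg_{\pi_i}(v) \pmod{2}.
\]
Each path $\pi_i$ contributes $0$ if $v \notin \pi_i$, contributes $2$ if $v$ is an internal vertex of $\pi_i$, and contributes $1$ if $v$ is one of the endpoints $u_i$ or $v_i$. Therefore modulo $2$, $\deg_H(v)$ equals the number of indices $i$ for which $v \in \{u_i,v_i\}$. By construction each vertex of $P$ is the endpoint of exactly one path and no vertex of $V(G) \setminus P$ is an endpoint of any path, so $\deg_H(v)$ is odd exactly when $v \in P$ and even otherwise, as required.

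There is no real obstacle here: the only thing one has to be slightly careful about is distinguishing ``symmetric difference of edge sets'' from ``union of edges with multiplicity,'' but the mod $2$ degree count behaves identically for both since each edge incident to $v$ contributes $1$ to $\deg_H(v)$ iff it appears in an odd number of $\pi_i$. The use of a spanning tree (as opposed to an arbitrary collection of paths in $G$) is only a convenience to guarantee that the paths $\pi_i$ are uniquely defined; any fixed choice of paths between the paired vertices would work equally well.
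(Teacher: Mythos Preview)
Your argument is correct: this is the standard $T$-join construction, and the parity computation goes through exactly as you describe. The paper does not actually supply a proof of this lemma; it simply cites \cite{bowen.kun.sabok}, Lemma~2.4, so there is nothing to compare against beyond noting that your self-contained argument is a clean and complete substitute for that citation.
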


\begin{theorem}\label{borel or}
Let $G$ be a locally finite Borel graph that admits a Borel connected toast.  If each vertex of $G$ has even degree then $G$ admits a Borel balanced orientation.  
\end{theorem}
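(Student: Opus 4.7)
The plan is to orient the edges of $G$ round by round along a bottom-up traversal of the Borel connected toast $\mathcal{T}$, maintaining an invariant that forces in/out-degree balance at each vertex as soon as all of its incident edges have been oriented. For a tile $K\in\mathcal{T}$, let $\tilde K := K\setminus\bigcup_{K\supsetneq L\in\mathcal{T}}L$, which is connected by \labelcref{toast:connected}. By \labelcref{toast:exhaustive,toast:disjoint}, the tiles containing both endpoints of any given edge $e$ form a chain under inclusion (any two are either disjoint or nested via $B_1$), so (since each tile is finite) there is a unique minimal such tile $K(e)$, and the sets $E_K := \{e : K(e)=K\}$ form a Borel partition of $E(G)$; a short case analysis shows every $e \in E_K$ has at least one endpoint in $\tilde K$. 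Define $\mathrm{depth}(K) := 1+\max\{\mathrm{depth}(L) : L \in \mathcal{T},\,L\subsetneq K\}$; this is finite (since $L\subsetneq K$ forces $|L|<|K|$) and Borel. Since two tiles of equal depth cannot overlap (overlapping tiles are nested by \labelcref{toast:disjoint}, forcing different depths), the $E_K$'s with $\mathrm{depth}(K)=d$ may be oriented independently at round $d$.

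My inductive invariant is that, after round $d$, every vertex $v$ whose closed neighborhood $N[v]$ lies in some tile of depth $\le d$ (a \emph{saturated} vertex) has signed excess $\mathrm{ex}(v) := \deg^+(v)-\deg^-(v) = 0$. The tiles containing a fixed $v$ form a chain, each edge at $v$ lies in some tile of that chain by \labelcref{toast:exhaustive}, and $v$ has finitely many incident edges, so there is a smallest tile $K_{\mathrm{sat}}(v)$ containing $N[v]$, and every $v$ becomes saturated at some finite round. Hence the invariant at the end yields the desired balanced orientation.

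At round $K$, form the finite graph $G_K$ on $\{v\in K : n_K(v)>0\}$ with edge set $E_K$, where $n_K(v) := |\{e\in E_K : v\in e\}|$. Since every edge of $E_K$ has an endpoint in the connected $\tilde K$, and every other non-isolated vertex of $G_K$ is $E_K$-adjacent to $\tilde K$, the graph $G_K$ is connected. For each $v\in V(G_K)$ newly saturated at $K$, set the target imbalance $\Delta_K(v) := -\mathrm{ex}_{K^-}(v)$; this has the correct parity $\Delta_K(v)\equiv n_K(v)\pmod 2$ because $d_G(v)$ is even and $\mathrm{ex}_{K^-}(v)$ accumulates only contributions from tiles strictly below $K$ in $v$'s chain. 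For the remaining $v\in V(G_K)$, select free targets $\Delta_K(v)$ of the correct parity with $\sum_v \Delta_K(v)=0$, chosen so that $|\mathrm{ex}(v)|$ stays small after round $K$. To realize the targets, I invoke the standard fact that a finite connected graph admits an orientation realizing any prescribed sum-zero imbalance vector satisfying the natural parity and magnitude constraints; for uniformity of choice I produce such an orientation by first using \cref{subgraph} to split $E_K$ into a spanning parity-correcting subgraph $H_K$ and an all-even complement, orienting the complement as balanced closed trails and $H_K$ along adjusting paths through the connected spine $\tilde K$.

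The main obstacle is to verify the magnitude constraint $|\mathrm{ex}_{K^-}(v)| \le n_K(v)$ for every newly saturated $v$; without this, $\Delta_K(v) = -\mathrm{ex}_{K^-}(v)$ cannot be realized. I will address this by strengthening the invariant to $|\mathrm{ex}(w)|\le g(w)$ at every round, where $g(w)$ counts $w$'s still-unoriented incident edges. The free targets $\Delta_L(w)$ chosen at earlier rounds will be used to preserve this bound, which is possible because the connectedness of each $G_L$ provides enough flexibility to distribute any required free imbalance across the tile while respecting the global sum-zero constraint. All choices depend only on finite local data within each tile, so they can be made Borel-uniformly, yielding the desired Borel balanced orientation.
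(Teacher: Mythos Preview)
Your bottom-up tile-by-tile scheme is natural, but there is a real gap at the orientation step. The ``standard fact'' you invoke---that a finite connected graph admits an orientation with any prescribed sum-zero imbalance vector satisfying the parity and pointwise magnitude constraints---is false. Take two triangles $abc$ and $ade$ sharing the vertex $a$, with target $\Delta=(0,2,2,-2,-2)$ at $(a,b,c,d,e)$: parity, magnitude, and sum-zero all hold, yet $\Delta(b)=\Delta(c)=2$ forces both edges at $b$ and both edges at $c$ to be outgoing, so the edge $bc$ would have to point both ways. What is actually required are the cut constraints $\bigl|\sum_{v\in S}\Delta(v)\bigr|\le |\{e\in E_K: |e\cap S|=1\}|$ for every $S\subseteq V(G_K)$, and you have not verified these for your $\Delta_K$. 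The construction via \cref{subgraph} does not repair this: splitting off an all-even subgraph and orienting it along closed trails contributes zero imbalance everywhere, so the entire target $\Delta_K$ must still be realized by orienting $H_K$ alone, and the same cut obstruction applies to $H_K$. Separately, you assert but do not prove that the constraint system---exact targets at newly saturated vertices, the strengthened bound $|\mathrm{ex}(w)|\le g(w)$ at free vertices, and global sum zero---is simultaneously satisfiable at every round; ``connectedness provides enough flexibility'' is precisely the step that needs an argument, not an assumption.

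The paper's proof sidesteps all of this by never prescribing imbalance targets. It instead iteratively peels off edge-disjoint cycles from $G$: at stage $n$, any edge still present inside some $K\in\mathcal{T}_{\le n}$ either lies in a finite even-degree component (hence on an Euler tour) or can be extended by two paths from its endpoints out to the connected set $\tilde L$ of the surrounding $L\in\mathcal{T}_{n+1}$, and \cref{subgraph} (applied with $P$ the two landing points) closes these into a cycle inside $L$. Each cycle is oriented cyclically, which contributes zero to $\deg^+-\deg^-$ at every vertex, so balance holds automatically at every stage with no bookkeeping; the only remaining point is that every edge is eventually captured by some removed cycle. This cycle-peeling approach is what replaces (and obviates) your target-hitting machinery.
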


\begin{proof} 
Let $\mathcal{T}$ be a Borel connected toast. We inductively build a sequence of even degree one-ended Borel graphs $G=G_0\supset G_1\supset G_2...$ such that $E(G_n)\setminus E(\mathcal{T}_n)=E(G_{n+1})\setminus E(\mathcal{T}_n)$ as follows:

Suppose we have defined $G_n.$  We claim that for every edge $e\in K\in \mathcal{T}_{\leq n}\cap G_n$ there is a cycle $C\subseteq L\in \mathcal{T}_{n+1}$ in $G_n$ with $e\in C.$  To see this, since $G_n$ has only even degree vertices each edge is either in an isolated component (in which case it contains an Euler tour covering this edge) or else has paths from both of its endpoints to $L\setminus V(\mathcal{T}_{\leq n}),$ which is connected by definition.  Applying Lemma \ref{subgraph} to this set and $P$ the endpoints of the paths gives a Borel family of edge disjoint cycles $\mathcal{C}_n$ such that $E(\mathcal{C}_{n+1})\subseteq E(\mathcal{T}_{n+1}),$ and for every minimal $K\in\mathcal{T}_{\leq n}$ with $E(K)\cap G_n\neq \emptyset$ there is a $C\in \mathcal{C}_{n+1}$ with $C\cap K\neq \emptyset.$  Let $G_{n+1}=G_n\setminus E(\mathcal{C}_{n+1}).$

For each cycle $C\in \bigcup_n \mathcal{C}_n$ orient its edges cyclically.  Then each vertex has outdegree equal to indegree in this partial orientation.  Repeating this for each level of the toast, every edge is oriented as at least one edge in a toast piece $K$ is oriented for every piece $L$ with $K\subset L,$ so this gives the desired balanced orientation.
\end{proof}

\subsection{Perfect matchings generically}
\hspace{0.5em}\\

Here we show that every bipartite one-ended $d$-regular Borel graph admits a Borel perfect matching generically.  In the pmp setting the analogous theorem was established in \cite{bowen.kun.sabok} by first finding acyclic fractional perfect matchings and then using the existence of connected toasts a.e. to randomly perturb these matchings, resulting in a new fractional perfect matching with a smaller number of unmatched points.  Since perturbation arguments of this type fundamentally rely on measure, a different approach must be used.  However, the strong pigeonhole principle given to us by the Baire category theorem allows us to immediately construct a perfect matching from a connected toast, bypassing the reduction to the acyclic case in previous arguments and greatly simplifying the proof.

Before beginning, recall that a \textbf{fractional perfect matching} is a function $\sigma:E(G)\rightarrow [0,1]$ such that $\sum_{e: v\in e}\sigma(e)=1$ for every $v\in V(G).$  

\begin{remark}\label{remark:circuit}
    Given a cycle $C\subset E(G)$ with distinguished edge $e'$ and fractional perfect matching $f$ with $\varepsilon\leq f(e)\leq 1-\varepsilon$ for all edges $e\in C,$ we may apply an \textbf{alternating $\varepsilon$-circuit} to obtain a new fractional matching $f',$ where $f'(e)=f(e)+\varepsilon$ if $e\in C$ is of even distance to $e',$ $f'(e)=f(e)-\varepsilon$ if $e$ is of odd distance, and $f'(e)=f(e)$ otherwise.
\end{remark}

\begin{theorem}\label{BP matching}
Let $G$ be a $d$-regular bipartite one-ended Borel graph.  Then $G$ admits a Borel perfect matching generically.
\end{theorem}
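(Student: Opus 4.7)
The plan is to use Theorem~\ref{toast generic} to obtain a Borel connected toast $\mathcal{T}$ on a comeager $G$-invariant Borel subset of $X$, and then to integralize the uniform fractional perfect matching $f \equiv 1/d$ along $\mathcal{T}$ using the alternating $(1/d)$-circuits of Remark~\ref{remark:circuit}. Since it suffices to construct a Borel perfect matching on this comeager invariant set, I may assume throughout that a Borel connected toast $\mathcal{T}$ is given on all of $X$.

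Set $f \equiv 1/d$, a fractional perfect matching whose values lie in the finite grid $\{k/d : 0 \le k \le d\}$. I process the toast level by level: at stage $n$, for each $K \in \mathcal{T}_n$, I modify $f$ on $E(K)$ so that at the end of stage $n$ every edge of any $K \in \mathcal{T}_{\le n}$ carries $f$-value in $\{0,1\}$. The pieces of $\mathcal{T}_n$ are pairwise disjoint by \labelcref{toast:disjoint}, so the modifications can be applied simultaneously to all $K \in \mathcal{T}_n$ in a Borel way. By the inductive hypothesis, every fractional edge in $E(K)$ has at least one endpoint in the connected core $K' := K \setminus \bigcup_{L \subsetneq K,\, L \in \mathcal{T}} L$, which is connected by \labelcref{toast:connected}. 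Since $\sum_{e \ni v} f(e) = 1 \in \mathbb{Z}$ at every vertex, any vertex incident to a fractional edge is in fact incident to at least two fractional edges, and together with the connectedness of $K'$ this allows one to extract an even cycle $C \subseteq E(K)$ supported on fractional edges. Applying an alternating $(1/d)$-circuit along $C$ preserves the fractional perfect matching property, leaves edges outside $K$ unchanged, and strictly decreases the number of fractional edges in $E(K)$; since $E(K)$ is finite, finitely many iterations leave $f$ integer-valued on all of $E(K)$.

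The principal obstacle is the cycle-extraction step: a vertex on the boundary of $K$ may have some of its fractional incident edges escape $K$, so a priori the fractional support inside $K$ need not contain a cycle. This is precisely where connectedness of $K'$ given by \labelcref{toast:connected} enters — two fractional edges that leak out of $K$ at distinct boundary vertices can be linked inside $K'$ by a path of edges of $K$, and this path can be closed into a cycle by rerouting through the already-integralized edges of smaller pieces $L \subsetneq K$ or through the $1/d$-valued backbone outside $K$ while keeping all edits confined to $E(K)$. This routing is the main content of the argument; the disjointness and finiteness of pieces at each toast level make the resulting procedure Borel.

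Once every level has been processed, \labelcref{toast:exhaustive} guarantees that every edge of $G$ lies in some piece of $\mathcal{T}$ and has been integralized at some finite stage, so the modifications stabilize pointwise to a Borel $\{0,1\}$-valued fractional perfect matching. Its edge-support is the desired Borel perfect matching on the original comeager invariant set, proving \cref{BP matching}.
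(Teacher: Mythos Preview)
Your proposal has a genuine gap at the induction step. You assert that at stage $n$, modifying $f$ only on $E(K)$ for $K \in \mathcal{T}_n$, you can make every edge of $K$ take a value in $\{0,1\}$. This is impossible by a parity count: any finite $K$ with $E(K) \ne \emptyset$ (inside an infinite component) contains a vertex $v$ with $1 \le a \le d-1$ edges in $E(K)$ and $b = d - a \ge 1$ edges leaving $K$; since the external edges are untouched they still sum to $b/d$, so the internal edges at $v$ must sum to $a/d \notin \mathbb{Z}$ and hence cannot all lie in $\{0,1\}$ while $f$ remains a fractional perfect matching. Your proposed rescue---closing cycles ``through the already-integralized edges of smaller pieces $L \subsetneq K$'' or ``through the $1/d$-valued backbone outside $K$ while keeping all edits confined to $E(K)$''---does not work: an edge with value $0$ or $1$ cannot absorb a $\pm 1/d$ shift and stay in $[0,1]$ without becoming fractional again, and routing outside $K$ by definition violates confinement to $E(K)$.

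The paper's proof is organized around exactly this obstruction. It never attempts to integralize an entire toast piece in one pass. At stage $i$ it selects a sparse Borel family $\mathcal{S}_i$---at most one bounded-size bottom-level piece per sufficiently large ambient tile---chosen so that $V(\mathcal{S}_i)$ is non-meagre in the $i$-th basic open set $U_i$. The key \cref{claim_matching} shows that a single piece $C_1$ with $k$ edges can be rounded provided one has a nested chain $C_1 \subsetneq \cdots \subsetneq C_{dk}$ above it: each $1/d$-circuit is permitted to spill one toast level higher, and $dk$ levels suffice. Edges outside $C_1$ but inside $C_{dk}$ are \emph{not} made integral at that stage. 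Consequently the limiting $M = \bigcup_i \sigma_i^{-1}\{1\}$ is only guaranteed to cover a comeagre invariant set, not all of $X$. Whether a connected toast alone yields a Borel perfect matching everywhere is precisely the open \cref{Q2}, so the strengthening your scheme implicitly targets is not known to be achievable.
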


\begin{proof}[Proof of Theorem \ref{BP matching}]
Let $\mathcal{T}$ be a connected toast. Let $\sigma_0$ be the (constant) fractional perfect matching that is equal to $\frac{1}{d}$ on all edges.  We inductively define a sequence of Borel fractional perfect matchings $\sigma_i,$ naturals $k_i\in\mathbb{N},$ and Borel families $\mathcal{S}_i$ such that:

\begin{enumerate}
    \item $\sigma_i(e)\in \{\frac{j}{d}: j\in \{0,...,d\}\}$ for all edges.
    \item $\sigma_i(e)\in \{0,1\}$ for all $e\in E(\mathcal{S}_i\cup \partial(\mathcal{S}_i)),$ and $V(\mathcal{S}_i)$ is non-meagre in $U_i$.
    \item $\sigma_i(e)=\frac{1}{d}$ for all $e\in E(G)\setminus E(\mathcal{T}_{k_i}).$
    \item If $\sigma_i(e)\in \{0,1\}$ then $\sigma_{i+1}(e)=\sigma_i(e).$
    \item $G\setminus \mathcal{S}_{<i}$ is one-ended, and there exists a $j$ with $\mathcal{S}_{<i}\subseteq \mathcal{T}_{<j}.$
\end{enumerate}

Given such a sequence, we claim that $M=\bigcup_i \sigma_i^{-1}\{1\}$ is the desired perfect matching.  To see this, note that any vertex is adjacent to at most one edge in $M$ by property (4) of the construction, and that generically many vertices are adjacent to an edge in $M$ by properties (2) and (4).

Now, suppose that we have constructed $\sigma_{i-1},$ $k_{i-1},$ and $\mathcal{S}_{i-1}$ with the desired properties.

\begin{claim}\label{claim_matching}
Let $C_1\in\mathcal{T}_1$, $|E(C_1)|=k$, and let $C_2,...,C_{dk}\in\mathcal{T}$ with $C_j\subsetneq C_{j+1}$ for each $j<k$. Then there is a fractional perfect matching $\sigma$ such that
\begin{enumerate}
    \item $\sigma$ agrees with $\sigma_{i-1}$ outside of $C_k$.
    \item $\sigma(e)\in\set{0,1}$ for each $e\in E(C_1)$.
    \item If $\sigma_{i-1}(e)\in\set{0,1}$, then $\sigma(e)=\sigma_{i-1}(e)$.
\end{enumerate}
\end{claim}

\begin{proof}
First, note that by \cref{remark:circuit}, we may assume without loss of generality that there are no cycles within $C_1$. 

We define $\sigma$ recursively. Once $\sigma(e)$ is integral, remove $e$ from our graph. Note that if we remain a fractional perfect matching at each step, then our graph will never have any leaves.

Pick any remaining (non integral) edge $e\in C_1$, and consider walks along edges leaving both endpoints of $e$. Since $C_1$ has no cycles and since we have no leaves, these walks enter $C_2\setminus C_1$. By connectivity of $C_2\setminus C_1$, these walks may be joined to form a cycle in $C_2$. 
Apply a $\frac{1}{d}$-circuit to this cycle. Note that this may disconnect $C_2\setminus C_1$, but $C_3\setminus C_2$ will still be connected. See \cref{fig:matching} for a partially drawn example with $e$ shown in blue, $d=3$, and all edges $e'$ starting with $\sigma_{i-1}(e')=\frac{1}{3}$. 

\begin{figure}
    \centering
\includegraphics[height=4cm]{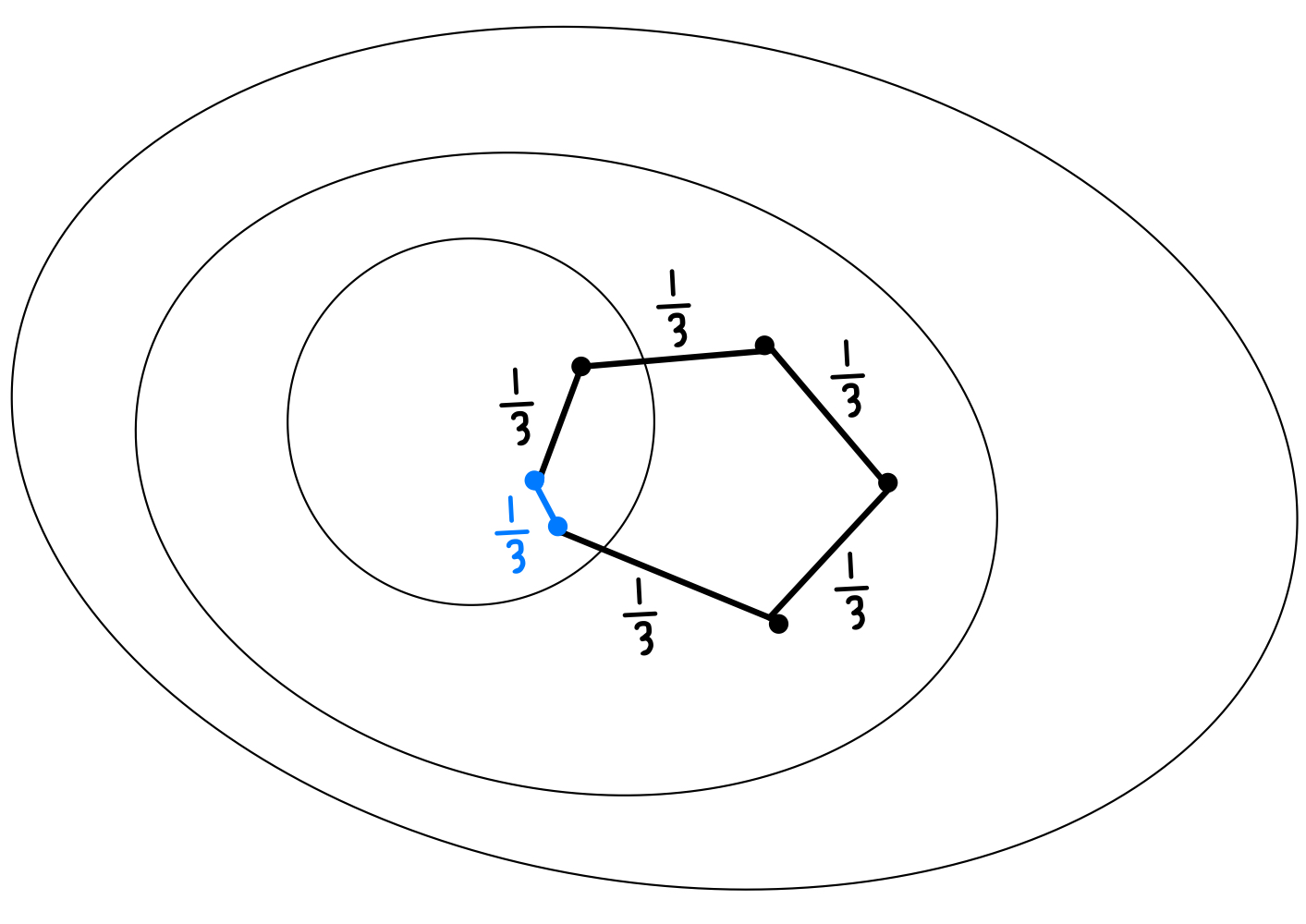}
\includegraphics[height=4cm]{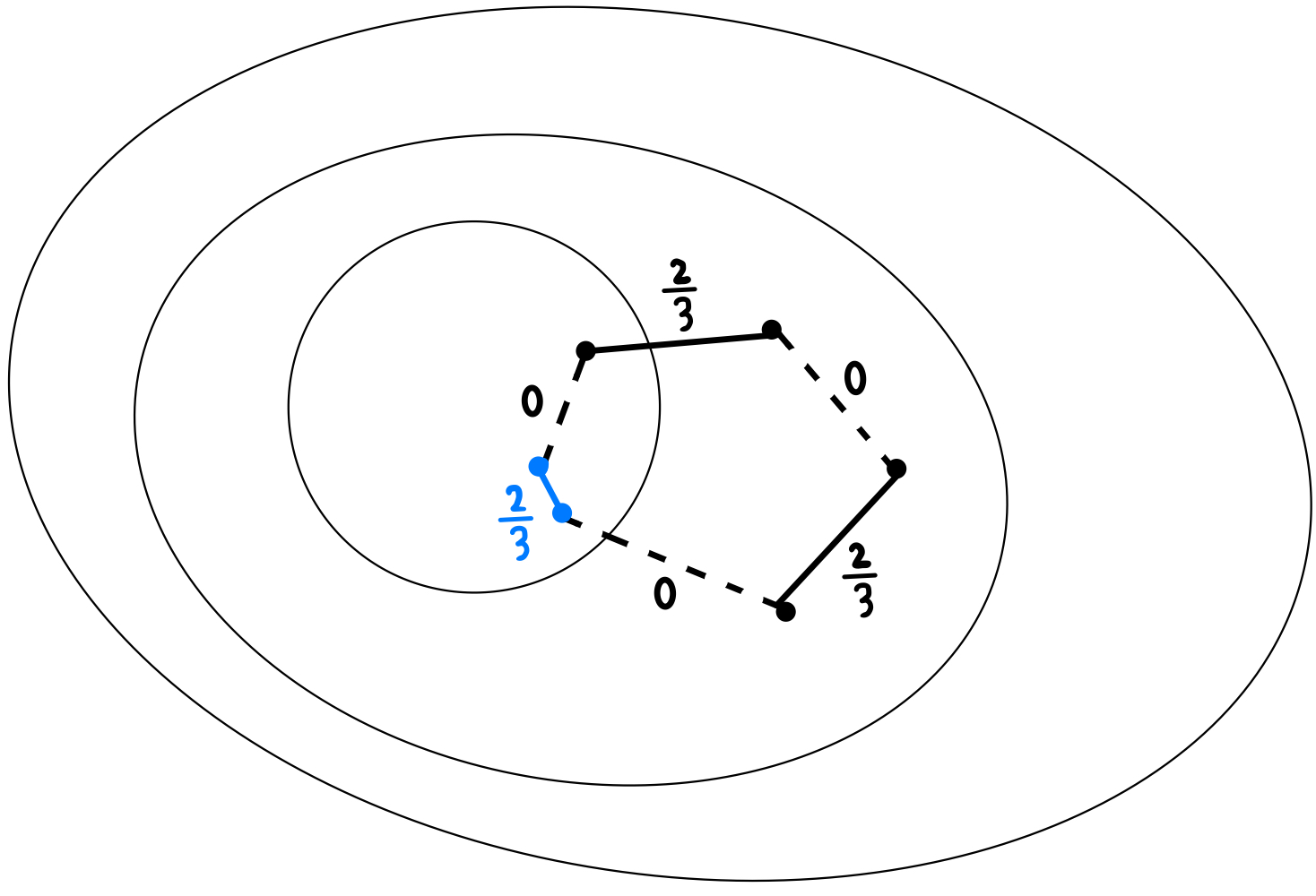}
\includegraphics[height=4cm]{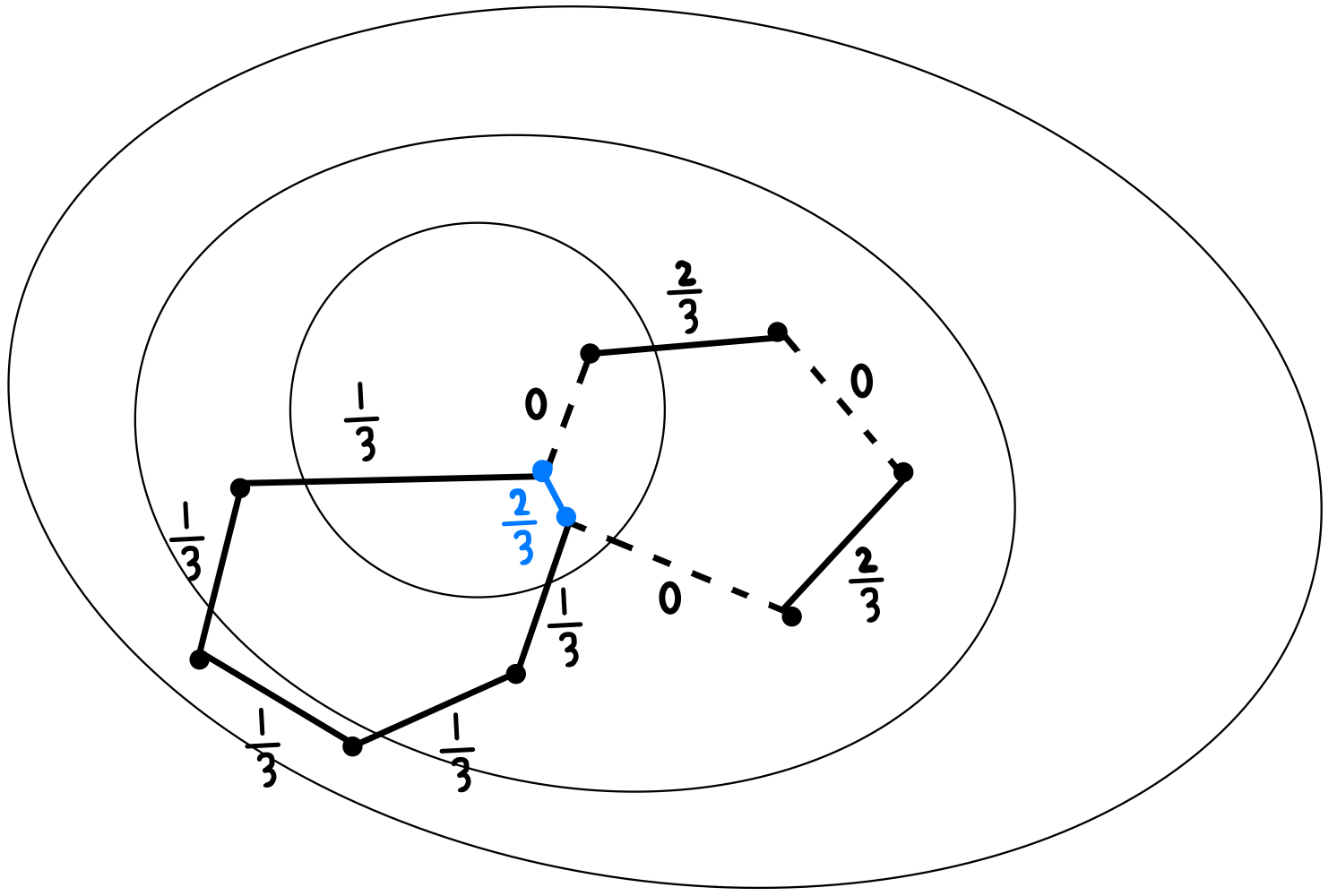}
\includegraphics[height=4cm]{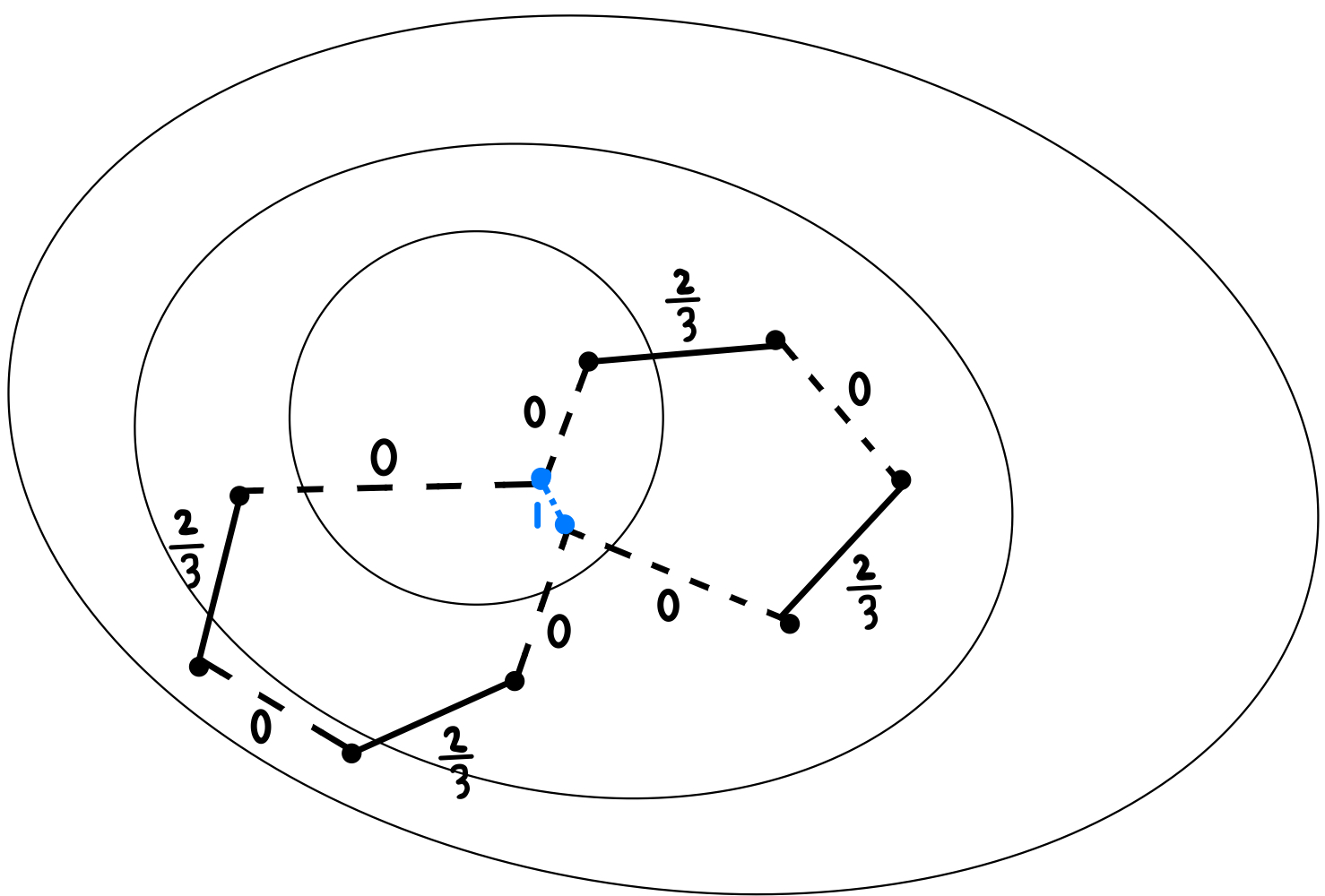}
    \caption{Towards a perfect matching: given a non-integral edge $e\in C_1$ we find a (necessarily even) cycle in $C_2$ and apply a $\frac{1}{d}$ circuit. Iterating this up to $d-1$ many times, we can ensure $\sigma(e)=1$.} 
    \label{fig:matching}
\end{figure}

Iterate this process, considering walks entering $C_3\setminus C_2,...,C_d\setminus C_{d-1}$. In doing this, we can ensure that $\sigma(e)$ becomes integral, and repeating this at most $k$ times ensures that all edges in $C_1$ become integral.
\end{proof}

With this claim in mind, use property (5) of the construction to find a connected toast $\mathcal{T}'\subseteq \mathcal{T}\cap (G\setminus \mathcal{S}_{<i})$ such that $\mathcal{T}'_1$ is non-meagre in $U_i.$ As every element of $\mathcal{T}'_{1}$ is finite, we know that there exists a $t_{i}$ such that $\{v\in C\in \mathcal{T}^{'}_{1}: |C|\leq t_{i}\}$ is non-meagre in $U_i.$  Let $k_i>dt_i$ be large enough such that $\mathcal{T}_i'=\{C\in\mathcal{T}_i: C\subset D_1\subset...\subset D_{k_i} \textnormal{ where each } D_j\in \mathcal{T}_{k_i} \}$ is non-meagre in $U_i.$  Each element of $\mathcal{T}_{k_i}$ contains only finitely many elements of $\mathcal{T}'_{1},$ so we may find a Borel $\omega$-coloring of elements of $\mathcal{T}'_1$ so that no two elements in the same $\mathcal{T}_{k_i}$ tile receive the same color.  Let $\mathcal{S}_i$ be the collection of elements in a color class whose induced vertex set is non-meagre in $U_i.$  

We round edges in $E(\mathcal{S}_i)$ as follows: for each $C\in\mathcal{S}_i$, fix $C\subsetneq D_1\subsetneq...\subsetneq D_{k_i}$, and obtain a fractional perfect matching satisfying the conclusion of \cref{claim_matching}. Since no two elements of $\mathcal{T}_{k_i}$ received the same color, these applications of \cref{claim_matching} do not interfere with each other.
\end{proof}

\begin{remark}
    As in Remark \ref{hcomb tree}, we could use a similar algorithm to construct a perfect matching in any one-ended, bipartite, d-regular highly computable graph.  This is outside the scope of the present paper, but does further the link between Baire measurable and highly computable combinatorics recently explored by Qian and Weilacher in \cite{qian2022descriptive}.
\end{remark}

\subsection{Perfect matchings a.e.}
\hspace{0.5em}\\

 In this section we briefly sketch the following facts:

\begin{theorem}
Let $G$ be a $d$-regular bipartite Borel graph that admits a connected toast.  Then $G$ admits a Borel perfect matching a.e.
\end{theorem}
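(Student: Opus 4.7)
The plan is to adapt the proof of Theorem~\ref{BP matching} to the a.e. setting by replacing the Baire category argument with a measure exhaustion. Begin with the uniform fractional perfect matching $\sigma_0(e) = 1/d$ on all edges and inductively construct Borel fractional perfect matchings $\sigma_i$ taking values in $\{j/d : 0 \le j \le d\}$ together with pairwise disjoint Borel families $\mathcal{S}_i$ of minimal tiles of the connected toast $\mathcal{T}$ restricted to $G \setminus V(\bigcup_{j<i} \mathcal{S}_j)$, with the property that $\sigma_i$ is integral on $E(\mathcal{S}_i) \cup E(\partial \mathcal{S}_i)$ and previously rounded edges stay rounded.

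The inductive step proceeds exactly as in Theorem~\ref{BP matching}, except that instead of selecting a color class whose vertex set is non-meagre in some $U_i$, we select one capturing a positive proportion of the uncovered vertex measure. Concretely, at stage $i$ we restrict attention to the Borel family of minimal tiles $C$ of the restricted toast with $|E(C)| \le t_i$ admitting a chain $C \subsetneq D_1 \subsetneq \cdots \subsetneq D_{dt_i}$ in $\mathcal{T}_{\le k_i}$. Since each $\mathcal{T}_{k_i}$-tile contains only finitely many such sub-tiles, this restricted family admits a finite Borel coloring in which no two minimal tiles inside the same top tile share a color. One color class then captures at least a $1/N$ fraction of the restricted uncovered measure, and applying Claim~\ref{claim_matching} to the chains of tiles in this class rounds all of their edges to $\{0,1\}$ without interference, yielding $\sigma_{i+1}$.

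Iterating the induction while letting $t_i, k_i \to \infty$ and applying a standard measure exhaustion argument in the style of the proof of Theorem~\ref{toast a.e.}, we obtain $\mu(V(\bigcup_i \mathcal{S}_i)) = 1$. The limit $M \defeq \bigcup_i \sigma_i^{-1}(1)$ is the desired Borel perfect matching on an invariant conull set: each vertex is incident to at most one edge of $M$ because $\sigma_i$ is always a fractional perfect matching, and a.e. vertex is incident to some edge of $M$ because the rounding procedure exhausts the measure.

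The main obstacle is ensuring that each stage of the exhaustion captures a definite positive proportion of the remaining uncovered measure, which requires carefully coordinating the growth of the local-coloring bound $N$ against the tile-size threshold $t_i$ and the chain-depth threshold $k_i$. Unlike the measure-preserving argument of \cite{bowen.kun.sabok}, this approach bypasses the reduction to acyclic fractional perfect matchings and random perturbations entirely, mirroring the simplification of the Baire measurable case.
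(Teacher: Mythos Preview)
Your approach is genuinely different from the paper's, and as written it has a real gap.

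The paper does \emph{not} adapt the Baire-category rounding of Theorem~\ref{BP matching} to the measure setting. Instead it invokes the argument of \cite{bowen.kun.sabok} (Theorem~6.4 there), which in the pmp case first produces a measurable fractional perfect matching whose non-integral part is acyclic, then reduces to bi-infinite lines, and finally uses the connected toast together with a random-perturbation argument to round the lines. The only place in \cite{bowen.kun.sabok} where the pmp hypothesis is used beyond the existence of a connected toast is the step asserting that extreme points of the space of measurable fractional perfect matchings are integral off a union of bi-infinite lines; the paper replaces that step by a short lemma that takes any measurable fractional perfect matching with a.e.\ acyclic non-integral part and, via the Conley--Miller matching theorem, upgrades it to one that is integral off a union of bi-infinite lines. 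Everything else in the \cite{bowen.kun.sabok} argument goes through verbatim.

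Your proposal instead tries to run the rounding of Claim~\ref{claim_matching} directly, selecting color classes by measure rather than category. The step that breaks is the assertion that the coloring of minimal tiles is \emph{finite}. Each individual $\mathcal{T}_{k_i}$-tile contains only finitely many minimal sub-tiles, but this number is not uniformly bounded across tiles: nothing in the definition of a connected toast bounds the size of $K\in\mathcal{T}_{k_i}$. Hence the coloring is a Borel $\omega$-coloring (exactly as in the paper's proof of Theorem~\ref{BP matching}), not a finite one, and the pigeonhole conclusion ``one color class captures at least a $1/N$ fraction'' does not follow. In the Baire setting this is harmless because some class of a countable partition is non-meagre in $U_i$; in the measure setting you only get some positive-measure class, with no lower bound on its proportion, and the exhaustion need not converge. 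You correctly flag this as ``the main obstacle,'' but you do not indicate how to coordinate $N$, $t_i$, and $k_i$ to overcome it, and there is no evident mechanism to do so without further restricting the tiles (which sacrifices measure) or rounding several minimal tiles within the same top tile simultaneously (which Claim~\ref{claim_matching} does not support, since each application can disconnect the intermediate annuli). As it stands, the proposal is a plausible heuristic but not a proof; the paper's route through \cite{bowen.kun.sabok} and the Conley--Miller lemma avoids this difficulty entirely.
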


\begin{cor}
Let $\Gamma$ be a one-ended amenable group with generating set $S$ such that the Cayley graph on $\Gamma$ induced by $S$ is bipartite, and let $G = G(S,a)$ be the Schreier graph of a free Borel action $\Gamma\acts^a X$.  Then $G$ admits a Borel perfect matching a.e. 
\end{cor}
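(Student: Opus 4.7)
The plan is to derive this corollary by combining three ingredients that are already in place: the general structural fact that a Schreier graph of a free action inherits regularity, local finiteness, bipartiteness, and one-endedness from the Cayley graph; the existence of connected toasts a.e.\ in Schreier graphs of free, null-preserving actions of finitely generated one-ended amenable groups (\cref{toast a.e.}); and the preceding theorem, which upgrades a connected toast in a $d$-regular bipartite Borel graph to a Borel perfect matching a.e.

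First I would check that the hypotheses of the matching theorem are met by $G = G(S,a)$. Without loss of generality $S$ is finite and symmetric, so $G$ is $d$-regular with $d = |S|$ and locally finite. Since the action is free, each connected component of $G$ is isomorphic (as a labelled graph) to the Cayley graph of $\Gamma$; hence every component is one-ended, and any cycle in $G$ lifts to a cycle in the Cayley graph of the same length, so bipartiteness of the Cayley graph implies bipartiteness of $G$.

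Next I would address the measure-theoretic hypothesis in \cref{toast a.e.}, which requires the action to be null-preserving. As recalled in the preliminaries, we may replace $\mu$ by an equivalent Borel probability measure $\nu$ under which $E_G$ is null-preserving and such that $\nu = \mu$ on every $E_G$-invariant Borel set. Applying \cref{toast a.e.} to $\Gamma \acts^a (X,\nu)$ yields a $G$-invariant Borel set $X' \subseteq X$ with $\nu(X') = 1$ (hence $\mu(X') = 1$) such that $G|_{X'}$ admits a connected toast.

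Finally, I would apply the preceding theorem (Borel perfect matching a.e.\ from a connected toast in the $d$-regular bipartite Borel setting) to $G|_{X'}$ to obtain a Borel perfect matching on a further $\mu$-conull Borel subset of $X'$, which is $\mu$-conull in $X$. There is no real obstacle here beyond verifying that each hypothesis transfers correctly; the one point that deserves attention is the null-preserving reduction, since it is what lets us invoke \cref{toast a.e.} without assuming $a$ is quasi-invariant from the outset.
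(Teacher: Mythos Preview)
Your proposal is correct and follows exactly the approach the paper intends: the corollary is stated immediately after the theorem on perfect matchings a.e.\ from connected toasts, and is meant to be deduced by combining that theorem with \cref{toast a.e.}, with the null-preserving reduction from the preliminaries handling the quasi-invariance issue just as you describe.
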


This follows essentially from Theorem \ref{toast a.e.} and the arguments in \cite{bowen.kun.sabok}, where the same theorem was proven for pmp graphs.  Other than to show the existence of connected toasts, the only place that the pmp assumption was used in Theorem 6.4 of \cite{bowen.kun.sabok} (the pmp version of the result we sketch here) was in order to ensure that extreme points in the space of measurable fractional perfect matchings are integral besides on a union of bi-infinite lines.  This is not necessarily true for non-pmp graphs, but we can circumvent this issue with the following lemma.

\begin{lemma}
Let $\sigma$ be a measurable fractional perfect matching such that $\sigma^{-1}(0,1)$ is a.e. acyclic.  Then there is a measurable fractional perfect matching $\sigma'$ so that $\sigma'^{-1}(0,1)\subseteq \sigma^{-1}(0,1)$ and $\sigma'$ is integral besides on a union of bi-infinite lines.
\end{lemma}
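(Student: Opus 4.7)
The plan is to refine $\sigma$ by an iterative weight-shifting procedure, producing a limit $\sigma'$ whose fractional support is a disjoint union of bi-infinite lines a.e.

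First I would record the structural fact that every vertex $v$ in the fractional support $F \defeq \sigma^{-1}(0,1)$ has $\deg_F(v) \geq 2$: the total weight at $v$ is $1$, integral edges contribute $0$ or $1$ each, and if $v$ had a single fractional edge that edge would be forced to weight $1$, contradicting membership in $F$. Combined with the a.e.\ acyclicity hypothesis, every connected component of $F$ is therefore an infinite tree of minimum degree $\geq 2$; such trees automatically contain bi-infinite geodesic lines. Call a measurable fractional perfect matching $\sigma'$ a \emph{refinement} of $\sigma$ if $\sigma'^{-1}(0,1)\subseteq F$, and write $B(\sigma') \defeq \{v: \deg_{\sigma'^{-1}(0,1)}(v) \geq 3\}$. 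My goal reduces to producing a refinement with $\mu(B(\sigma'))=0$, because together with the no-leaves property and acyclicity this forces $\sigma'^{-1}(0,1)$ to be a.e.\ $2$-regular and acyclic, i.e.\ a union of bi-infinite lines.

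The main step is a single-step reduction: given any refinement $\sigma'$ with $\mu(B(\sigma'))>0$, construct a refinement $\sigma''$ with $\mu(B(\sigma'')) \leq (1-c)\mu(B(\sigma'))$ for an absolute constant $c = c(d)>0$; iterating and passing to a pointwise limit along a Cauchy subsequence then yields the desired $\sigma'$. For the single-step reduction, write $F' \defeq \sigma'^{-1}(0,1)$ and fix $v \in B(\sigma')$. Since the component of $v$ in $F'$ is a leaf-free tree and $v$ has $\geq 3$ fractional neighbors $u_1,u_2,u_3$, the subtrees of $F'\setminus\{v\}$ rooted at $u_1,u_2,u_3$ are pairwise disjoint and each contains an infinite ray $R_i$ from $u_i$. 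Using the Luzin--Novikov uniformization theorem applied to a Borel total order on $E(G)$, I would measurably select such a triple $(R_1,R_2,R_3)$ for every $v \in B(\sigma')$. At $v$ I would then perform the alternating three-way shift with parameter $\varepsilon>0$: decrease $\sigma'$ by $\varepsilon$ alternately along $R_3$ (starting with $-\varepsilon$ on $\{v,u_3\}$) and increase $\sigma'$ by $\varepsilon/2$ alternately along each of $R_1,R_2$. These shifts cancel at every interior ray vertex; at $v$ they cancel since $-\varepsilon + \varepsilon/2 + \varepsilon/2 = 0$; and they act only on edges in $F'$. Picking $\varepsilon$ to be the smallest value that drives some edge on $R_1\cup R_2\cup R_3$ to $\{0,1\}$ integralizes at least one edge, strictly lowering the fractional degree either at $v$ or at a vertex along one of the rays.

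To avoid conflicts between simultaneous shifts at different bad vertices I would first apply a Borel countable coloring to $B(\sigma')$ with the property that within each color class the selected ray-triples are pairwise edge-disjoint; this is feasible because $F'$ has bounded degree $d$, so within a given $n$-neighborhood there are only $O(d^n)$ potential conflicts. Applying the shift on one color class at a time gives a net positive-measure decrease in $B$ per color class, yielding the uniform reduction factor $(1-c)$. The main obstacle will be the measurable selection of three infinite rays at each bad vertex: infinite rays in a locally finite Borel forest are not immediately definable from local data, and one needs both the no-leaves property of $F'$ and standard descriptive-set-theoretic selection theorems to produce a Borel selector; a secondary concern is bookkeeping the measure lost when some rays happen to fail the edge-disjointness requirement across color classes, for which the Radon--Nikodym-cocycle machinery of \cref{lemma:cocycle} is the natural tool.
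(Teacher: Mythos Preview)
Your approach is substantially different from the paper's and, as written, has genuine gaps.

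The paper's proof is a two-line citation: since $F=\sigma^{-1}(0,1)$ is acyclic, leaf-free (as you correctly observe), and hyperfinite (being a subgraph of a graph that admits a connected toast), Theorem~A of \cite{conley.miller.pm} yields a measurable matching $M$ of $F$ whose unmatched vertices form a union $B$ of bi-infinite lines. One then simply sets $\sigma'=1$ on $M$, $\sigma'=\tfrac12$ on $B$, $\sigma'=0$ on $F\setminus(M\cup B)$, and $\sigma'=\sigma$ off $F$. No iteration and no weight-shifting are needed; the hard work is entirely absorbed by the cited matching theorem.

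Your iterative three-way shift breaks down at two concrete points. First, your $\varepsilon$ is an infimum over the infinitely many edges of $R_1\cup R_2\cup R_3$ and can equal $0$: edge weights along a ray are forced to alternate as $a,1-a$ only across degree-$2$ vertices of $F'$, so a ray passing through infinitely many branch points may carry weights accumulating at $\{0,1\}$ without attaining it, rendering the shift vacuous. Second, the countable Borel coloring you require---pairwise edge-disjoint ray-triples within each color class---need not exist, because rays selected at distinct bad vertices can share an infinite tail, so the conflict graph on $B(\sigma')$ can have infinite degree; your $O(d^n)$ local bound is beside the point for unbounded rays, and neither Luzin--Novikov nor \cref{lemma:cocycle} helps, as both are tailored to uniformly bounded configurations. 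Even granting these two steps, the uniform reduction factor $(1-c)$ is unjustified: the edge your shift integralizes need not be incident to the bad vertex $v$ you started from, so the bookkeeping linking ``edges integralized'' to ``measure removed from $B$'' is missing.
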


\begin{proof}
Since the subgraph induced by $\sigma^{-1}(0,1)$ has no degree $1$ vertices and is hyperfinite, we may apply \cite{conley.miller.pm} Theorem A to find a measurable matching $M$ of this graph which covers all but a union of bi-infinite lines $B$.  Letting $\sigma'(e)=\sigma(e)$ for all $e\notin \sigma^{-1}(0,1),$ $\sigma'(e)=1$ for all $e\in M,$ $\sigma'(e)=\frac{1}{2}$ for $e\in B,$ and $\sigma'(e)=0$ otherwise is as desired.
\end{proof}

\bibliographystyle{amsalpha} 
\bibliography{1} 
\end{document}